\documentclass{article}
\usepackage{amsthm, amssymb, amsmath, url,verbatim}
\usepackage{graphicx}
\newtheorem{thm}{Theorem}
\newtheorem{prop}[thm]{Proposition}

\newtheorem{lem}[thm]{Lemma}

\newtheorem{claim}{Claim}

\newtheorem*{defn}{Definition}

\newcommand{\floor}[1]{\left\lfloor #1 \right\rfloor}
\newcommand{\Z}{\mathbb{Z}}

\title{Asymptotic Results for the Queen Packing Problem}
\author{Daniel M. Kane}

\begin{document}

\maketitle

\section{Introduction}

A classic chess problem is that of placing 8 queens on a standard board so that no two attack each other. This problem and its generalizations to larger board sizes are fairly well studied. In this paper, we will study the related problem of producing two distinct armies of queens so that no queen in one army is attacking any queen in the other army.

Variations of this problem have been considered since as early as 1896 in Rouse Ball's \emph{Mathematical Recreations and Essays}. In this book, it is asked what the largest number of unattacked squares that can be left by an army of 8 queens on a standard chess board. Since then this problem has shown up now and then (for example a variant shows up in problem 316 in \cite{dudeney}). More recently, there has been more serious interest in the problem. Work has been made on both a theoretical level and with the help of computer assistance (see for example \cite{smith}). For a brief survey of work on this problem see the article by Martin Gardner \cite{gardner}.

There seem to be two reasonable asymptotic limits, one in which the size of the board, $n$, is fixed and the objective is to find two non-attacking armies of size $m$ for $m$ as large as possible. This problem appears on the Online Encyclopedia of Integer Sequences (\cite{oeis}). Despite much study little is still known about the general answer. The optimal configuration for an $11\times 11$ board was shown to be $m=17$ by \cite{smith}, and the configuration is shown in Figure \ref{queenSquareFig}. A natural scaling of this type of configuration gives an asymptotic lower bound of $m\geq (9/64)n^2+O(n)$ for larger boards, but very little is known beyond this.

The other natural asymptotic regime is to fix the size $k$ of one of the armies and to let the size $n$ of the board go to infinity. It is clear that in this regime, we want to place our $k$ queens so that they attack as few total squares as possible so that the other army can occupy the remaining squares. It is clear that for fixed $k$, the minimum number of attacked squares is going to be proportional to $n$. Somehow the real question is how few rows, columns and diagonals can you get away with attacking. In this paper, we pin down the constant of proportionality as a function of $k$.

When considering this, problem the first thing to note is that you do not have much control over how many squares a \emph{given} queen attacks. The number will vary from approximately $3n$ squares for queens near the edge of the board to roughly $4n$ for queens near the center, but because each queen attacks a full row and a full column (in addition to diagonals), each will attack some multiple of $n$ squares.

\begin{figure}
\begin{center}
\includegraphics[scale=0.2]{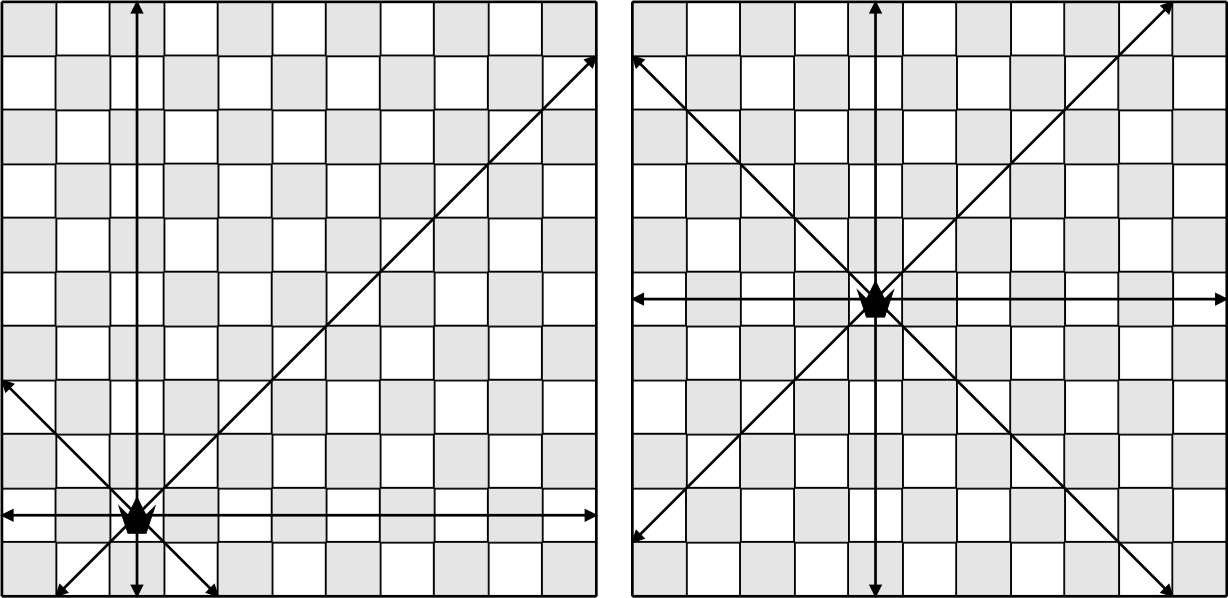}
\caption{The squares defended by individual queens. Ones near the center of the board defend more squares.}\label{singleQueensFig}
\end{center}
\end{figure}

If each queen in one army attacks a different $n$ squares, you will very quickly cover the entire board. However, this can be avoided by having multiple queens attacking the same square. Any two queens will attack some squares in common, however for most pairs of queens this cannot be more than a small number. In particular, if one picks for each of the two queens one of the four directions that it can attack along, you will typically find at most one square of intersection for these lines. This means that most pairs of queens will attack at most 16 squares (actually 12 if you note that the parallel lines will not intersect) in common. This will always hold unless the queens share a row, column, or diagonal.

\begin{figure}
\begin{center}
\includegraphics[scale=0.2]{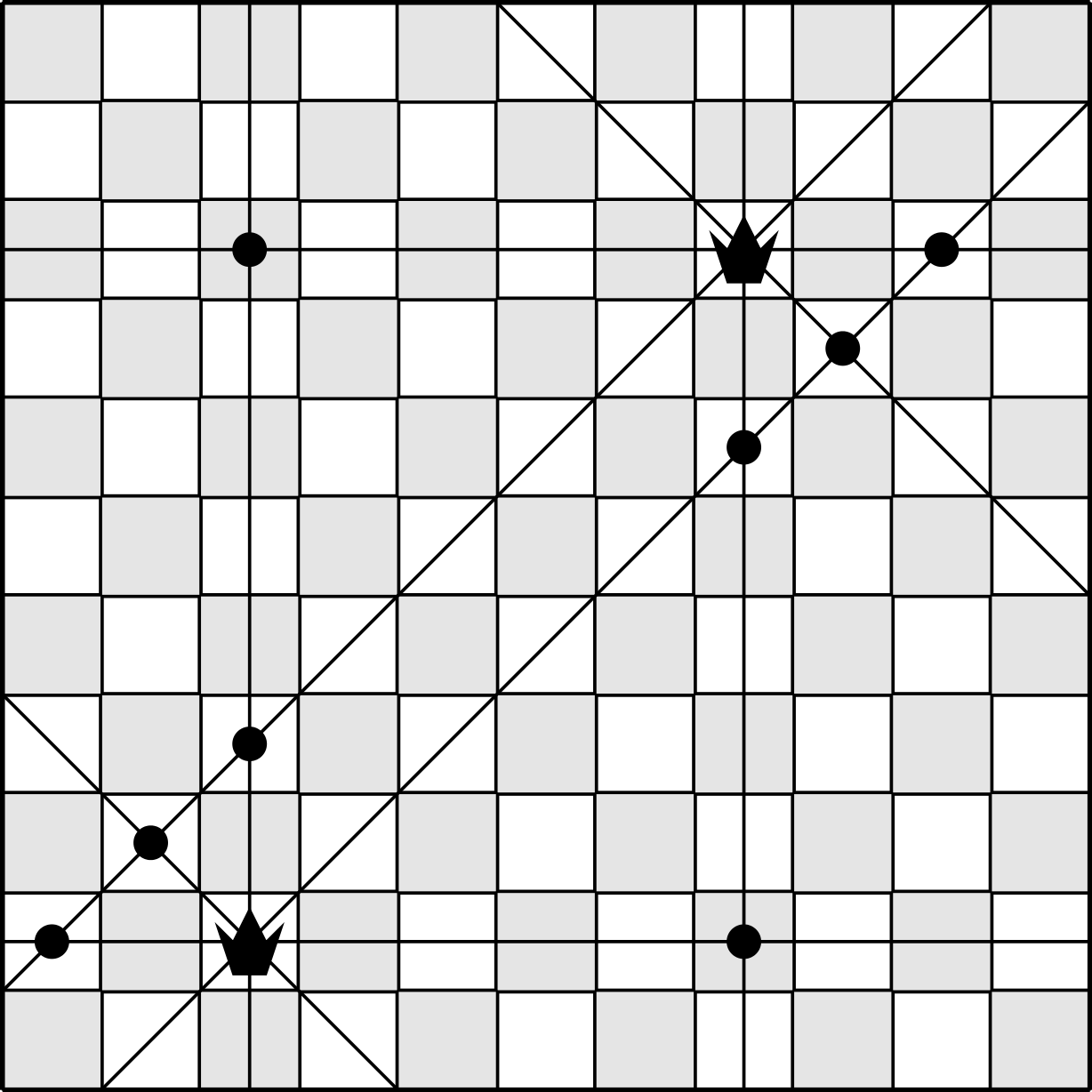}
\caption{The squares attacked in common by a pair of queens.}\label{sharedSquaresFig}
\end{center}
\end{figure}

This suggests that the correct strategy is to pack the queens densely together into regions. For example if the queens are placed in a $k\times k$ square as shown in Figure \ref{queenSquareFig}. In this way you can place $k^2$ queens, but only attack squares on one of $k$ rows, $k$ columns and $4k$ diagonals. Therefore with $k^2$ queens, you attack a total of at most $6kn$ squares, which scales like the square root of the number of queens rather than linearly.

\begin{figure}
\begin{center}
\hfill \includegraphics[scale=0.1]{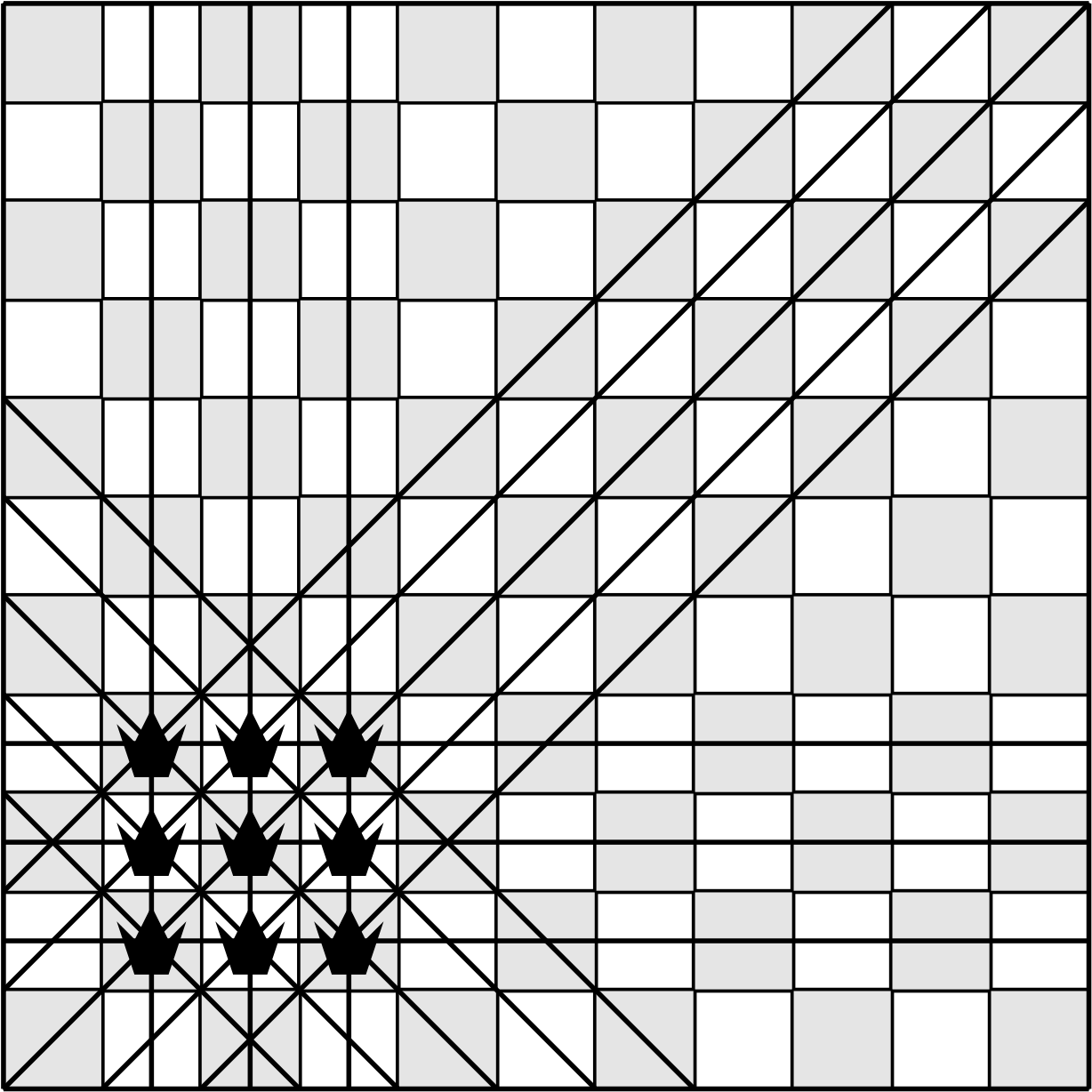}\hfill\includegraphics[scale=0.1]{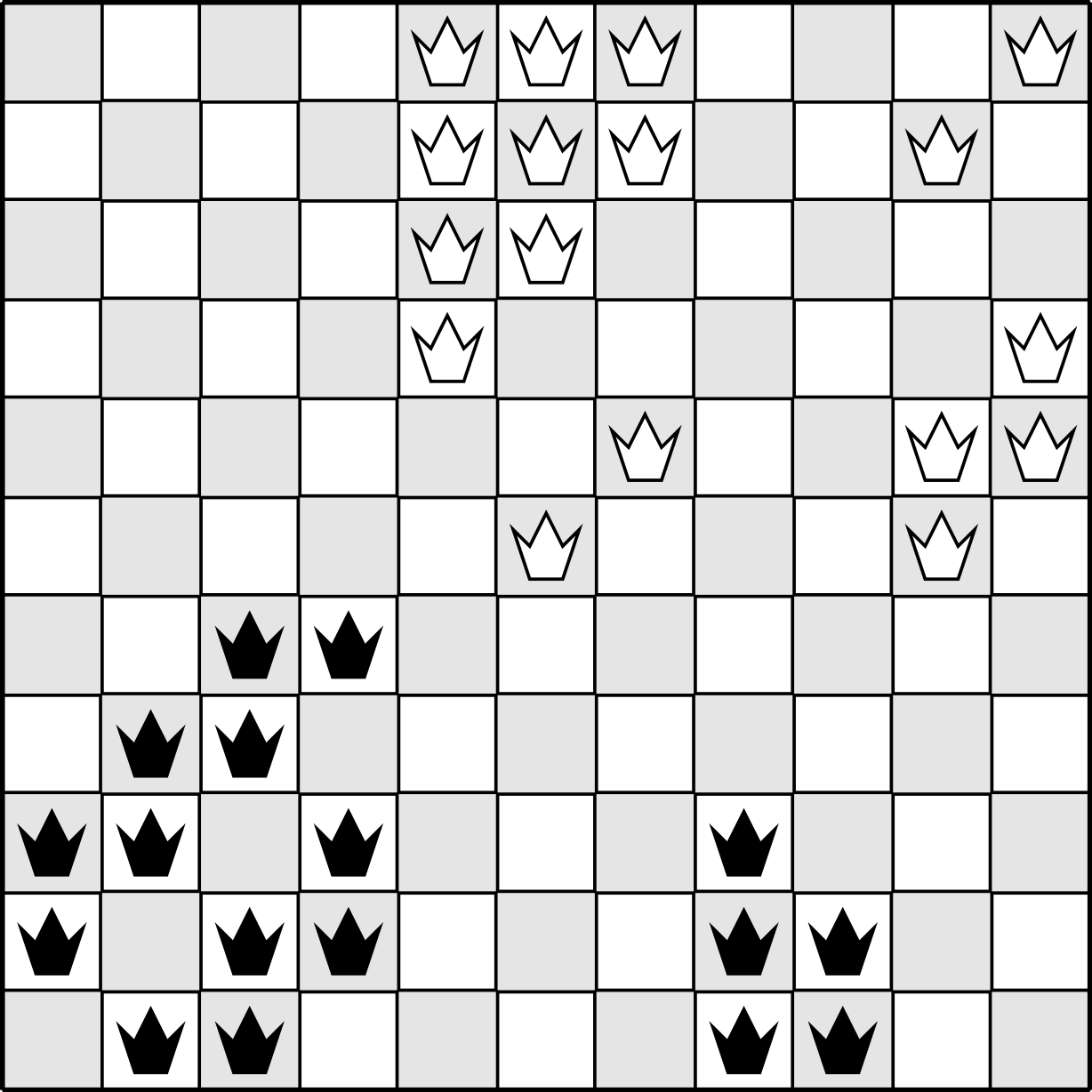}\hfill \phantom{.}
\caption{Left: The squares attacked by a $3\times 3$ block of queens. Right: Two non-attacking queen armies of size $17$ on an $11\times 11$ board.}\label{queenSquareFig}
\end{center}
\end{figure}

\section{Good Arrangements}

An obvious attempt is to arrange the queens in an $m\times m$ square. Unfortunately, the queens on the corners here will attack new diagonals all by themselves, removing these queens will be a very cheap way to reduce the number of attacked squares. If your queens are all located on a corner of the board, only some of these diagonals matter, but by shaving off the appropriate corners of the square, you end up with a much more efficient hexagon as seen in figure \ref{arrangementsFig}.

\begin{figure}
\begin{center}
\hfill \includegraphics[scale=0.1]{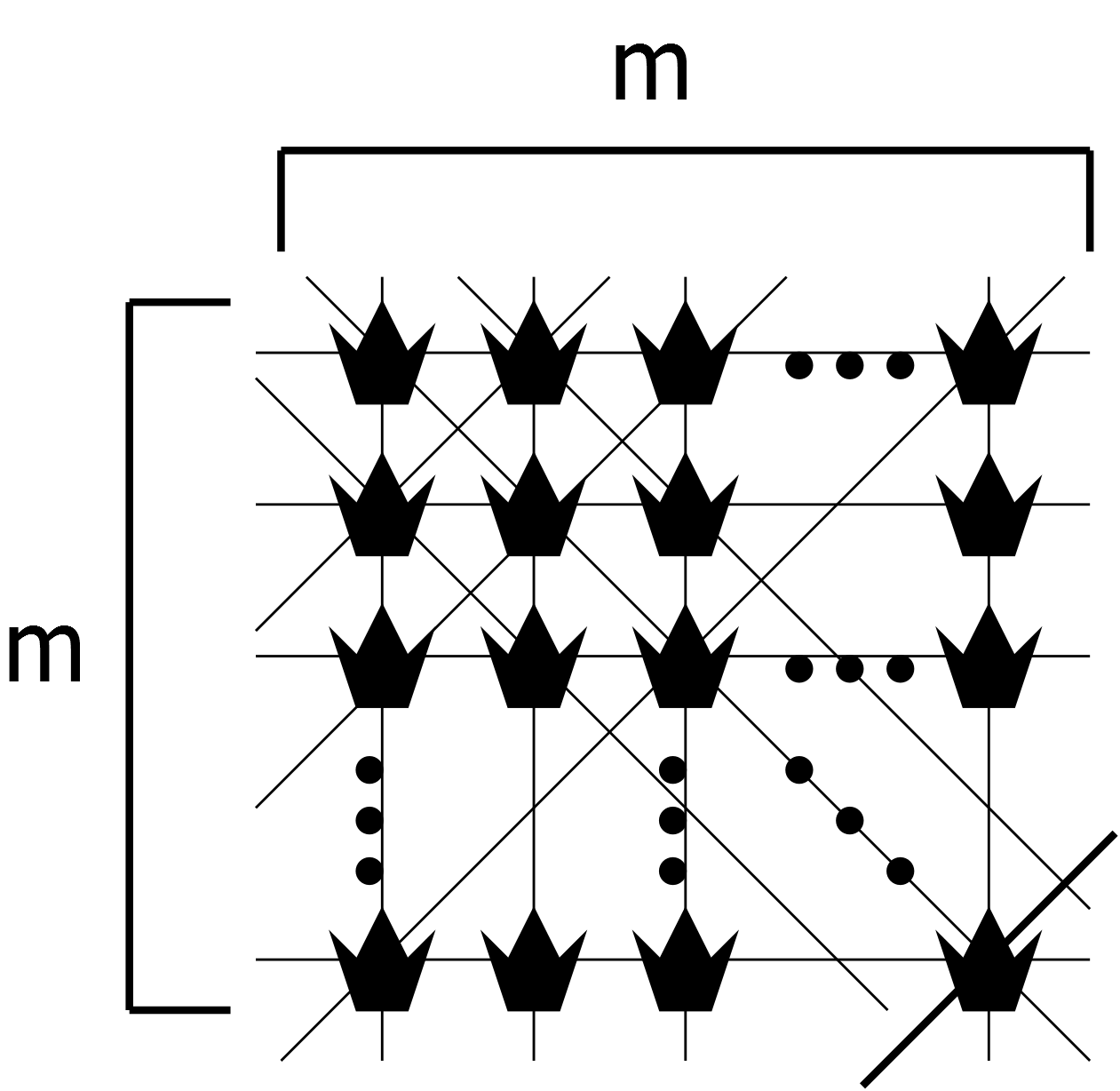}\hfill\includegraphics[scale=0.1]{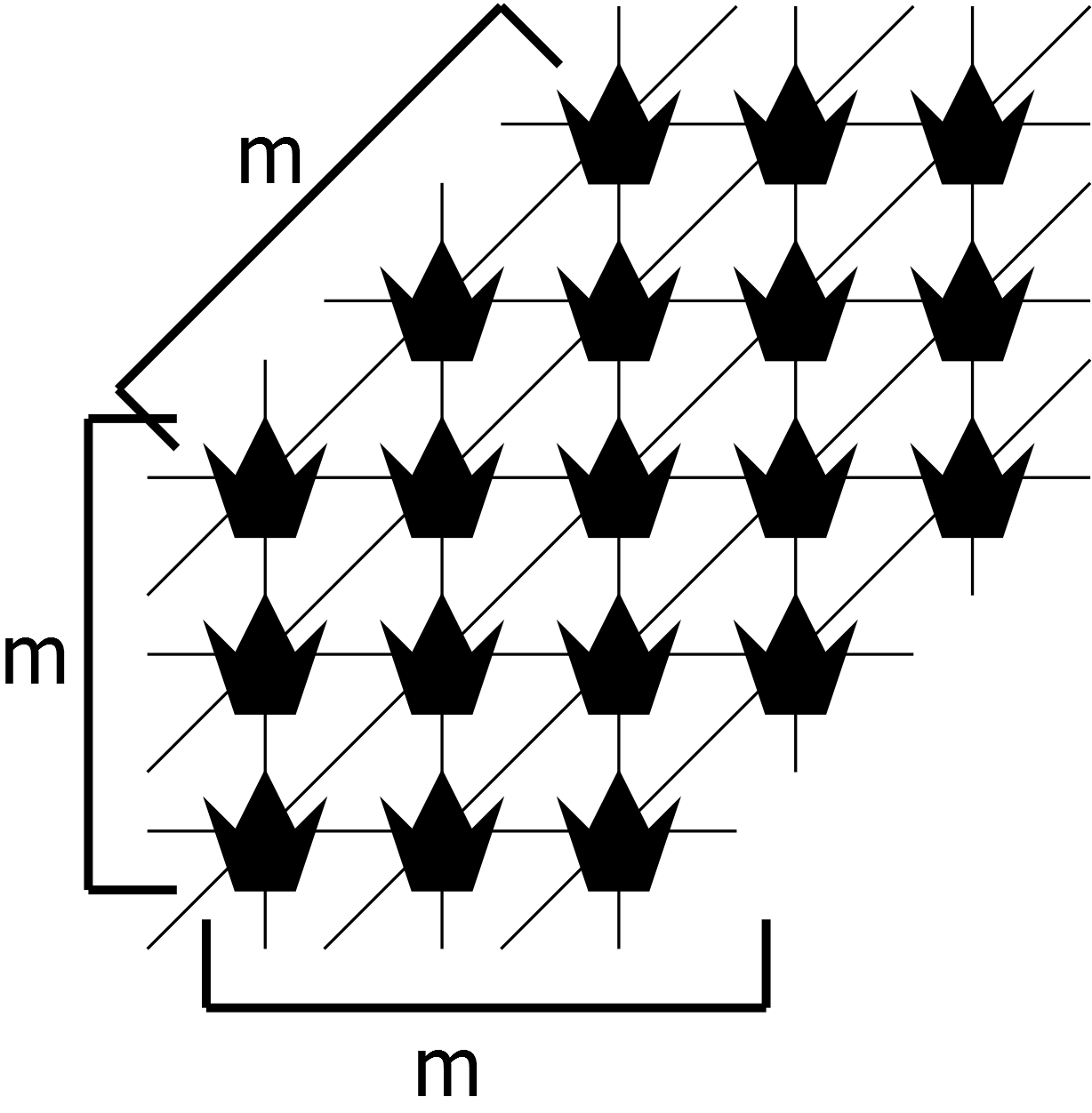}\hfill \phantom{.}
\caption{Left: An $m\times m$ square of queens. Right: A hexagon of queens.}\label{arrangementsFig}
\end{center}
\end{figure}

The queens in the hexagon shown cover $2m-1$ rows, $2m-1$ columns, and $2m-1$ long diagonals (assuming that the figure is located in the bottom left corner of the board). Thus, there are a total of approximately $(6m-3)n$ attacked squares. It is not hard to see that the total number of queens in this region is $3m^2-3m+1$. This works very well when the number of queens is exactly $3m^2-3m+1$ for some integer $m$, but for intermediate numbers of queens, you might want to use a hexagon with slightly unequal sides in order to fit them in.

\begin{figure}
\begin{center}
\hfill\includegraphics[scale=0.1]{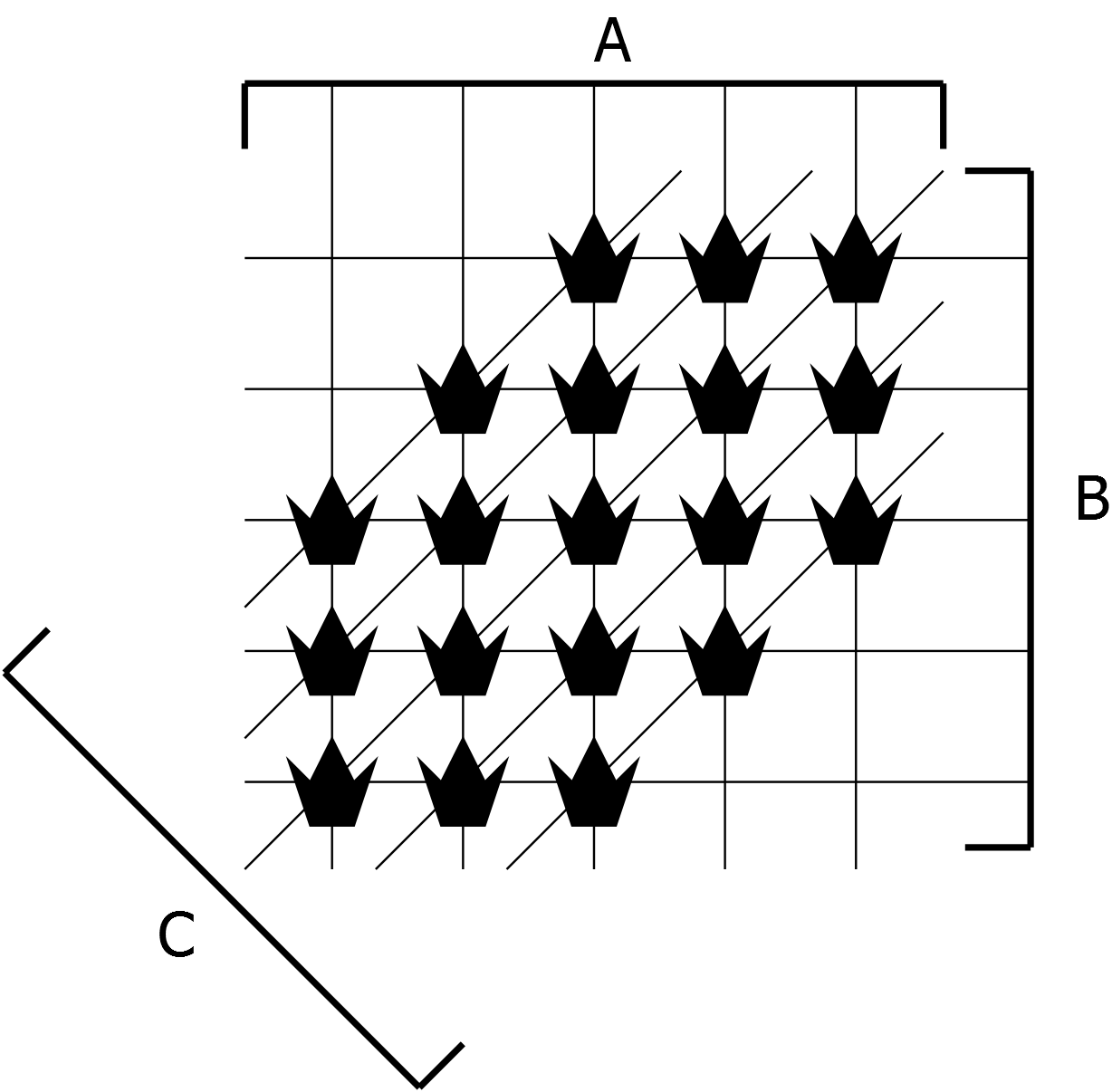}\hfill\includegraphics[scale=0.1]{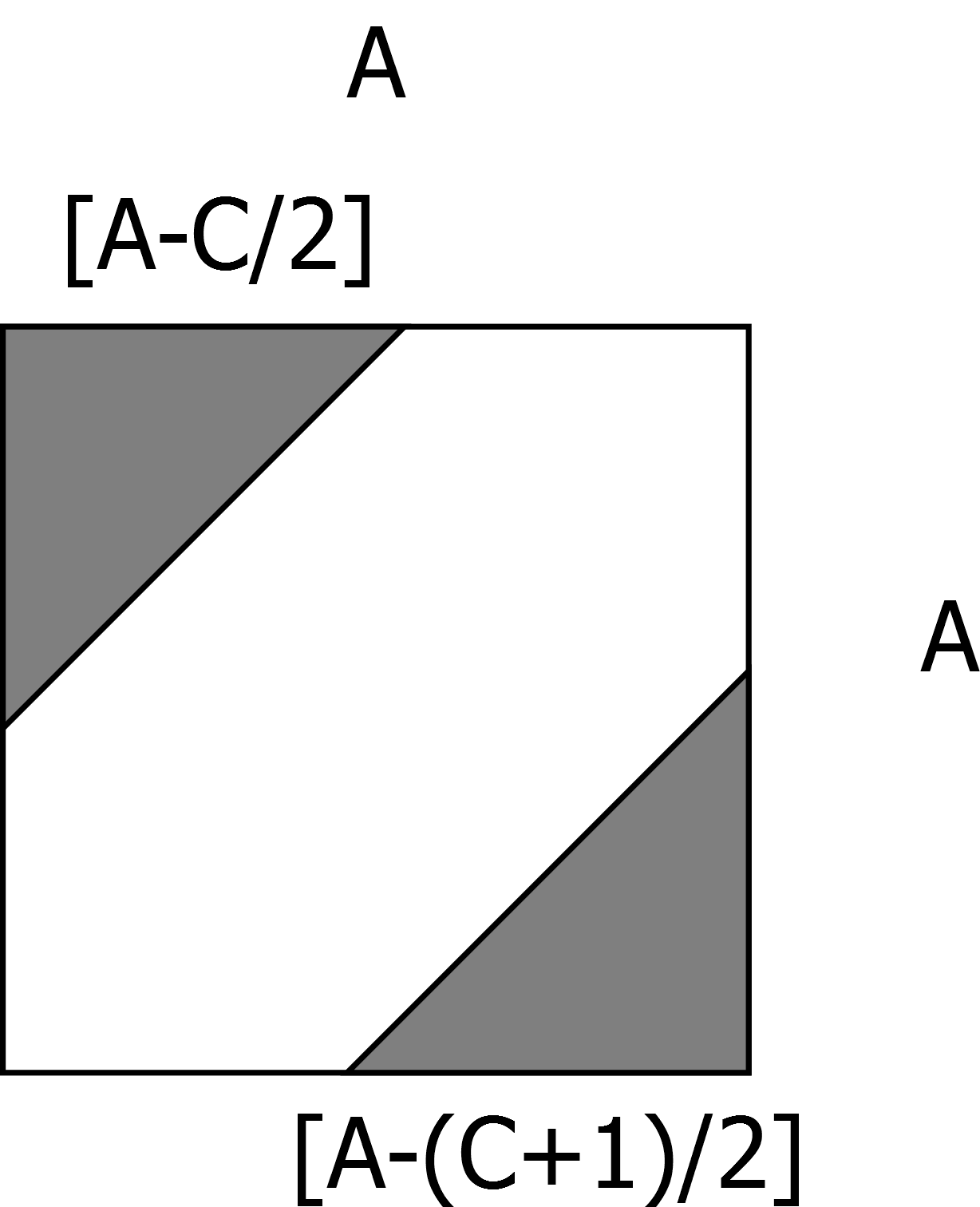} \hfill \phantom{.}
\caption{Left: An uneven hexagon. Right: Computing the number of queens.}\label{genHexFig}
\end{center}
\end{figure}

In particular, if we wish to defend approximately $mn$ squares, we can consider a hexagon consisting of all the queens on the leftmost $A$ columns, the bottommost $B$ rows, and the middle $C$ diagonals for the unique triple of integers $A,B,C$ with $A+B+C=m$, $A=B$ and $|A-C|\leq 1$. It is clear that the resulting arrangement of queens attacks at most $mn$ squares, the question is how many queens are in the arrangement. In order to answer this question, we note that there are $A^2$ squares at the intersection of a specified row and specified column, however of the $2A-1$ positive diagonals that these squares lie on, only $C$ of them are in the specified set. The uncovered squares consist of two right triangles of side lengths $\floor{(2A-C-1)/2}$ and $\floor{(2A-C)/2}$. Therefore, the total number, $K$, of squares at a three way intersection is
$$
A^2 - \floor{(2A-C-1)/2}(\floor{(2A-C-1)/2}+1)/2 - \floor{(2A-C)/2}(\floor{(2A-C)/2}+1)/2.
$$
A simple computation shows that this is always equal to $\floor{\frac{m^2+3}{12}}.$

One might, expect that this is always the optimal, but in the case where $m$ is congruent to $6$ modulo $12$, one can actually squeeze in an extra queen. In particular, instead of putting one hexagon in a corner with $A=B=C=m/3$ yielding $m^2/12$ queens, you can instead put a hexagon with $A=B=C=m/6$ in \emph{each} corner of the board without attacking any more squares. On the other hand, this allows you squeeze in an extra queen as shows in Figure \ref{fourHexFig}.

\begin{figure}
\begin{center}
\hfill\includegraphics[scale=0.1]{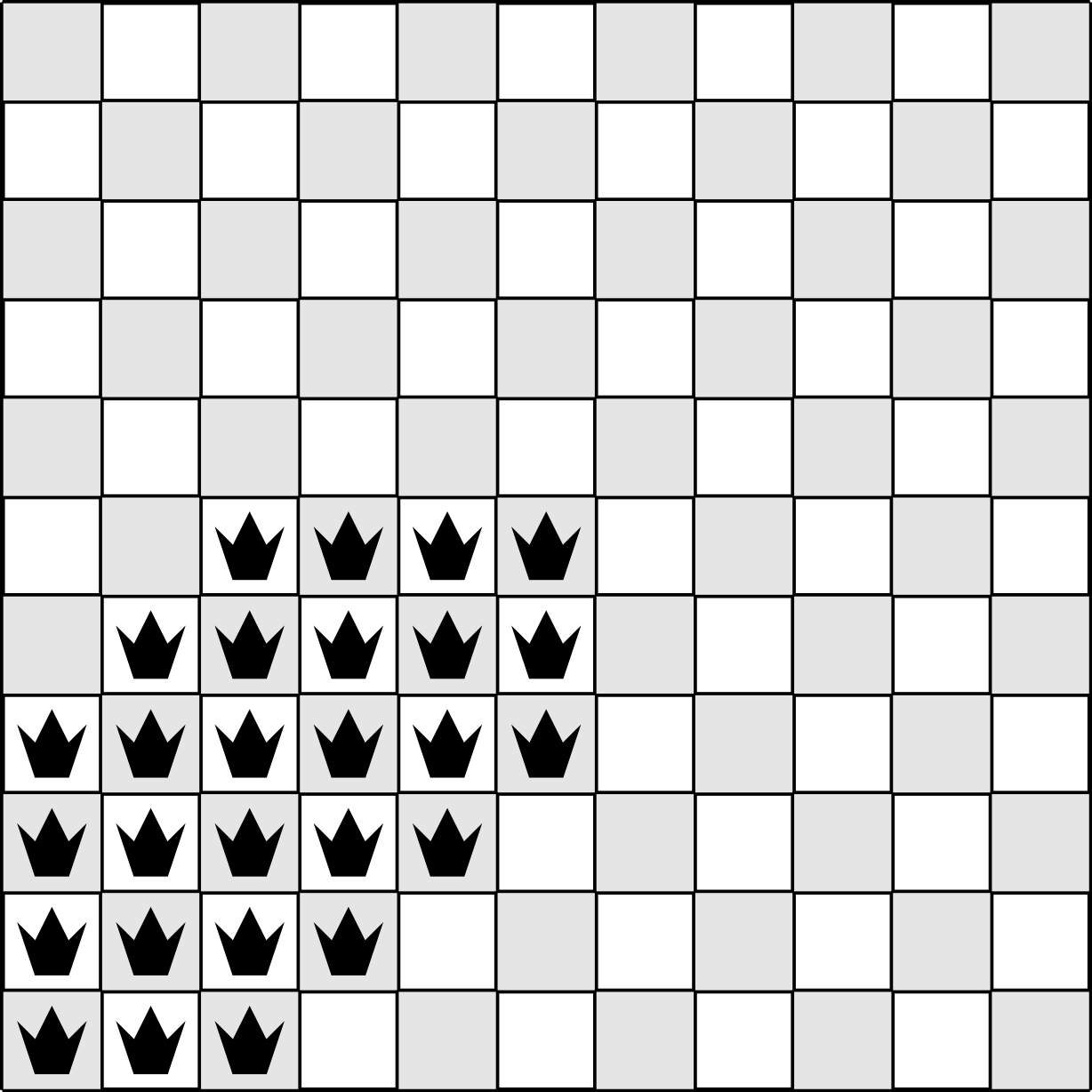}\hfill\includegraphics[scale=0.1]{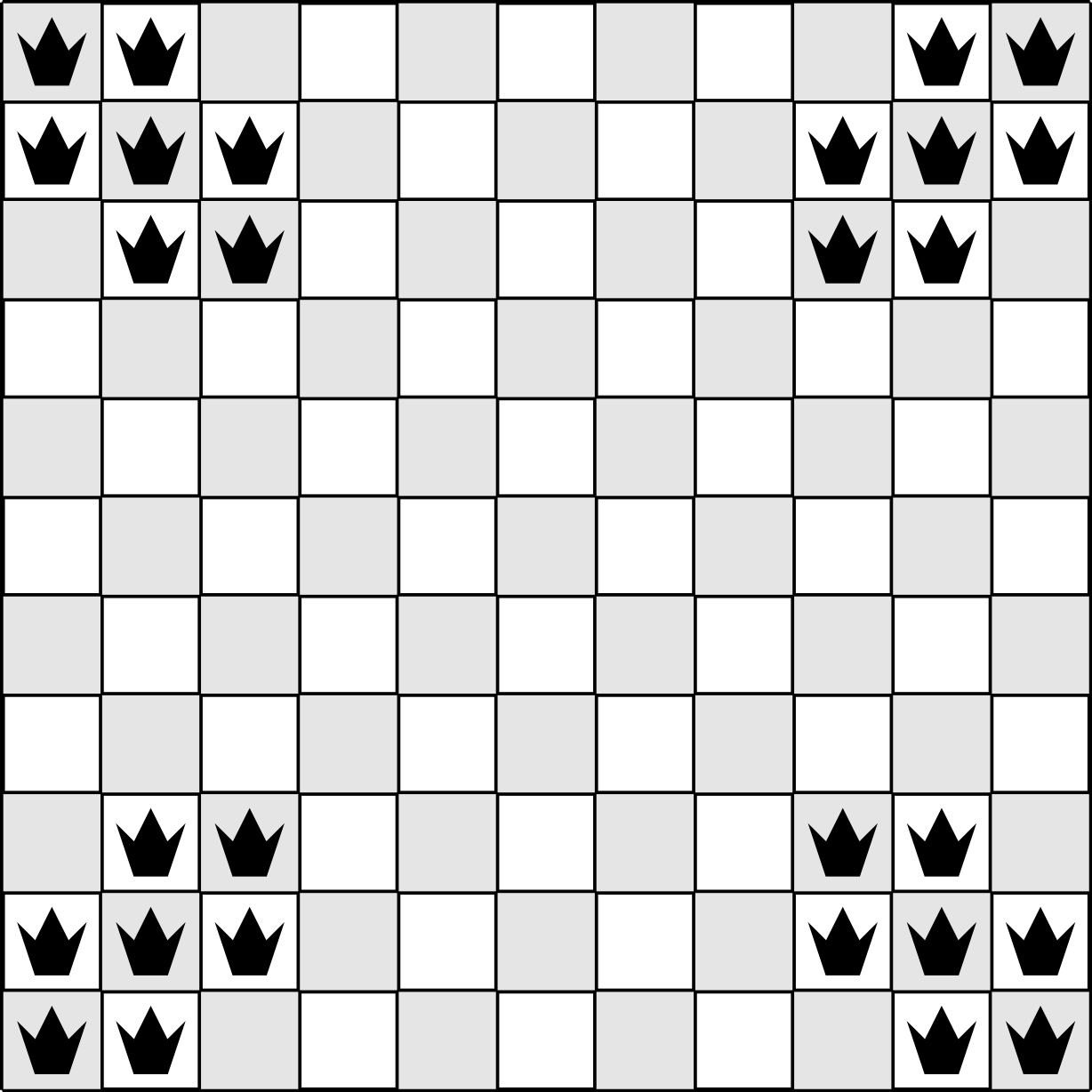} \hfill \phantom{.}
\caption{One hexagon with 6 rows, columns and long diagonals can only fit $27$ queens, but by putting one hexagon in each corner, you can manage $28$.}\label{fourHexFig}
\end{center}
\end{figure}

These configurations seem to be close to optimal in almost all cases when the number of queens is relatively small compared to the size of the board. However, there is one notable exception. If there are exactly $9$ queens on the board, one can put one queen in each corner, one in the middle of each side and one in the center (if $n$ is odd, for $n$ even, you need to slightly modify the construction) as shown below in Figure \ref{ninequeensFig}. When this happens, the queens attack squares on three rows, three columns, two long diagonals and four half-diagonals, for a total of roughly $10n$ squares. Note that the previous constructions could only do this with at most $8$ queens.

\begin{figure}
\begin{center}
\includegraphics[scale=0.1]{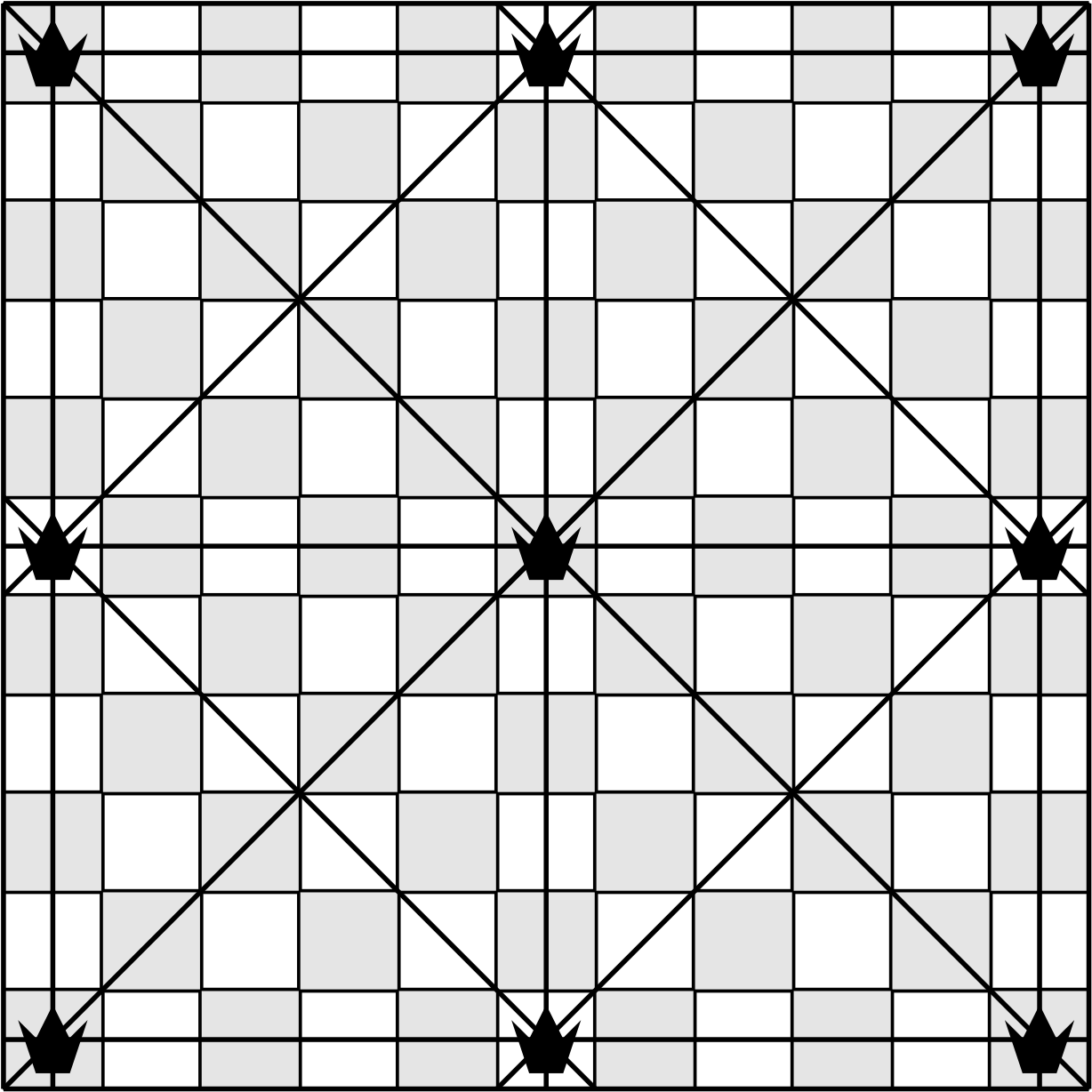}
\caption{A placement of nine queens on an  $11\times 11$ board.}\label{ninequeensFig}
\end{center}
\end{figure}

It turns out though, that this is the only remaining case. In order to summarize our constructions thus far, we define a function $G(m)$, the greatest number of queens that we can put on the board in this way without attacking more than about $nm$ squares. With a little bit of work, we can compute the function as follows:
\begin{defn}
$$
G(m) = \begin{cases}\floor{\frac{m^2}{12}}+1 & \textrm{if }m\equiv 3,6,9\pmod{12}, \textrm{ or if }m=10\\ & \\\floor{\frac{m^2}{12}} &\textrm{otherwise}. \end{cases}
$$
\end{defn}

Using this we can state our main Theorem:
\begin{thm}\label{mainThm}
Let $k$ and $n$ be positive integers with $k\leq n^2$. Let $m$ be the smallest positive integer so that $G(m)\geq k$. Then among placements of $k$ queens on an $n\times n$ board the minimum possible number of total squares attacked by these queens is
$
mn+O(k).
$
\end{thm}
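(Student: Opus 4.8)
The plan is to prove matching upper and lower bounds; essentially all the difficulty is in the lower bound.

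\textbf{Upper bound.} Given $k$, let $m$ be as in the statement. Provided $k\le n^{2}/2$ — which forces $m=O(\sqrt k)$ and in particular guarantees that the relevant region fits on the board — place on the board the corresponding arrangement from Section 2: the hexagon whose three side parameters sum to $m$, or, in the exceptional cases $m\equiv 3,6,9\pmod{12}$ or $m=10$, the four-corner hexagon or the nine-queen arrangement. By the computations of Section 2, this arrangement holds $G(m)\ge k$ queens and attacks at most $mn+O(m^{2})=mn+O(k)$ squares (here $m^{2}=O(k)$ because $\floor{(m-1)^{2}/12}\le G(m-1)<k$). Deleting $G(m)-k$ of the queens only shrinks the attacked set. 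When instead $k\ge n^{2}/12$ the claim is trivial, since then $m=\Theta(n)$, so $mn=\Theta(n^{2})$, while the attacked set never has more than $n^{2}\le mn+O(k)$ squares; as $n^{2}/12<n^{2}/2$ these ranges cover every $k$.

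\textbf{Lower bound: setup.} Fix a placement of $k$ queens attacking $A$ squares, and let $\rho,\gamma$ be the numbers of occupied rows and columns, $\delta_{\pm}$ the numbers of occupied diagonals of the two slopes, $\ell_{\pm}$ their total lengths, and $W=(\rho+\gamma)n+\ell_{-}+\ell_{+}$ the total length of all occupied lines. It suffices to exhibit an integer $L$ with $A\ge Ln-O(k)$ and $G(L)\ge k$; since $G$ is non-decreasing (immediate from its definition) and $m$ is the least integer with $G(m)\ge k$, this gives $A\ge mn-O(k)$. I would first dispense with the "spread-out" configurations: if $\max(\rho,\gamma)\ge\sqrt{12k}+1$ then $A\ge\max(\rho,\gamma)\,n\ge mn$ directly, and more generally if $W\ge(\sqrt{12k}+1)\,n$ one can extract $A\ge mn-O(k)$ once the overlaps are controlled. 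These cases also absorb the situation where some $\delta_{\pm}$ is large, since $\delta_{-}\le\rho\gamma$, and a large $\delta_{-}$ with small $\ell_{-}$ forces the queens near a corner where the diagonals of the other slope are all long, making $\ell_{+}$ — and hence $A$ — large.

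\textbf{Lower bound: the dense regime.} In the remaining case all of $\rho,\gamma,\delta_{-},\delta_{+}$ are $O(\sqrt k)$, so the occupied lines have only $\rho\gamma+\rho\delta_{-}+\dots+\delta_{-}\delta_{+}=O(k)$ mutual intersection points, whence crude inclusion–exclusion gives $A\ge W-O(k)$. It therefore suffices to prove $W\ge mn-O(k)$, i.e.\ with $w:=W/n$ that $k\le w^{2}/12+O(w)$, and — to recover the "$+1$"s in $G$ — to pin down the $O(1)$ slack for the exceptional residues of $m$. The inputs are incidence inequalities: two non-parallel lines meet in at most one point, so an occupied row carries at most $\min(\gamma,\delta_{-},\delta_{+})$ queens (and likewise for the other three families), giving $k\le$ the product of any two of $\rho,\gamma,\delta_{-},\delta_{+}$; and, crucially, an occupied slope-$v$ diagonal carries at most $\min(\rho,\gamma,n-|v|)$ queens, so $k\le\sum_{v}\min(\rho,\gamma,n-|v|)$ and the analogous sum over the other slope. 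The role of these refined sums, together with $w=\rho+\gamma+\ell_{-}/n+\ell_{+}/n$, is to encode the fact that occupying few diagonals of one slope squeezes the queens into a thin band, forcing $k$ strictly below $\rho\gamma$; carrying out the ensuing optimization yields $\max k=w^{2}/12+O(w)$, with the corner hexagon $\rho=\gamma\approx\ell_{-}/n\approx w/3$, $\ell_{+}=O(k)$, as the extremiser.

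\textbf{Main obstacle.} The delicate point is obtaining the \emph{exact} constant $1/12$ and the integer corrections in $G$. When $k=o(n)$ the error $O(k)$ is $o(n)$, so $A=mn+O(k)$ really does pin down $m$, and the optimization must be sharp enough to certify both that a hexagon with side parameters summing to $m-1$ cannot hold $k$ queens and that the four-hexagon and nine-queen constructions are the \emph{only} ways to beat the plain hexagon. This requires careful bookkeeping of floors and a genuine enumeration of the competing near-extremal shapes (for instance slightly "spread" lattices of rows and columns, which increase $\rho\gamma$ but only at the price of more occupied diagonals, hence a larger $w$). Making the distinction between the \emph{number} of occupied diagonals and their total \emph{length} quantitatively tight enough to drive this optimization is the technical heart of the argument.
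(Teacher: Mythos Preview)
Your upper bound matches the paper's and is fine.

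Your lower-bound plan diverges from the paper's in a substantive way, and the divergence is exactly where the gap is. You propose to track the \emph{total lengths} $\ell_{\pm}$ of both diagonal families separately, set $w=\rho+\gamma+\ell_{-}/n+\ell_{+}/n$, and then run an optimization showing $k\le w^{2}/12+O(w)$. You do not carry out that optimization, and your own ``Main obstacle'' section concedes as much. The trouble is that the incidence inequalities you list (products of pairs, and $k\le\sum_v\min(\rho,\gamma,n-|v|)$) are not sharp enough to force the constant $1/12$: for the corner hexagon with $\rho=\gamma\approx m/3$ they give $k\le m^{2}/9$, not $m^{2}/12$, so the promised optimization cannot be a mechanical consequence of the stated constraints. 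Getting from those inequalities to the exact $G(m)$, including the $+1$ corrections for the exceptional residues, is the whole problem, and your outline does not indicate how you would do it.

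The paper avoids this four-parameter optimization altogether by two ideas you are missing. First, it does not keep the two diagonal families separate: instead it takes $C$ to be the minimum number of diagonals (of \emph{either} slope, mixed freely) needed to cover the queens, and uses K\"onig's theorem to show that this cover number controls the total diagonal length one can extract --- specifically, $C$ queens with pairwise-disjoint diagonals exist, hence $2C$ diagonals with total length $\ge Cn$. This collapses the problem to three parameters $A,B,C$ with $A+B+C=M$. Second, to bound $k$ by a function of $M$, the paper decomposes the $A\times B$ grid of row/column intersections into concentric \emph{rings} and shows that any diagonal hits each ring in at most two points; summing $\min(2C,\text{ring size})$ over rings gives a closed-form bound $F(M)$ that is already within one of $G(M)$. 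The residual cases $M\equiv 2,4,8,10\pmod{12}$ are then handled by a detailed analysis of the ``critical ring''. Neither the K\"onig reduction nor the ring decomposition appears in your plan, and without something playing their role I do not see how your optimization closes.
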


Note that we have already proved the requisite upper bound, namely that $G(m)$ queens can be placed without attacking more than $mn$ squares using the constructions above. Proving a matching lower bound will be somewhat more challenging.

\section{Lower Bounds}

To prove the lower bound, we will need to know that if $k>G(m)$ that we will need to attack substantially more than $mn$ squares. To get this bound, we will consider the lengths of the rows, columns and diagonals that queens lie on. In particular, we will prove:

\begin{prop}\label{mainProp}
Suppose that $k$ queens are placed on an $n\times n$ chessboard for $k>G(m)$ for some integer $m$. Then there are $O(m)$ rows, columns and diagonals with queens on them so that the sum of the lengths of these rows, columns and diagonals is at least $n(m+1)$.
\end{prop}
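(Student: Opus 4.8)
The plan is to study the placement through its four families of occupied lines. Let $R,C,D^{+},D^{-}$ be the sets of rows, columns, positive diagonals and negative diagonals that contain at least one queen, and set $a=|R|$, $b=|C|$. The basic structural fact is that any two of the four ``coordinates'' of a cell (its row, column, positive-diagonal index and negative-diagonal index) already determine the cell, so the product of any two of $|R|,|C|,|D^{+}|,|D^{-}|$ is at least $k$; in particular $ab\ge k$. If $a+b\ge m+1$ we are immediately done: any $m+1$ of the occupied rows and columns form $O(m)$ lines of total length $(m+1)n$. So assume henceforth $a+b\le m$. Combined with $ab\ge k>G(m)\ge m^{2}/12$ this forces $a+b\ge m/\sqrt3$, so $a,b$ (and likewise $|D^{+}|,|D^{-}|\ge k/\min(a,b)$) are all $\Omega(m)$. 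Writing $t=m-a-b\ge 0$, we have $t\le(1-1/\sqrt3)m=O(m)$, and it now suffices to produce $O(m)$ occupied diagonals of total length at least $(t+1)n$: appending them to the $a+b$ occupied rows and columns gives $O(m)$ lines of total length at least $(m-t)n+(t+1)n=(m+1)n$.

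The key input for the diagonals is the elementary identity that for every cell $(x,y)$ the length $\ell^{+}$ of its positive diagonal and the length $\ell^{-}$ of its negative diagonal satisfy $\ell^{+}+\ell^{-}\ge n+1$. Hence every queen lies on at least one diagonal of length exceeding $n/2$. Since a diagonal meets each row and each column at most once, such a diagonal carries at most $\min(a,b)$ queens, so the number of occupied diagonals of length exceeding $n/2$ is at least $k/\min(a,b)\ge 2k/(a+b)\ge 2k/m=\Omega(m)$. If this number is at least $2(t+1)$, we simply take $2(t+1)$ of them: their total length exceeds $(t+1)n$, the overall line count $a+b+2(t+1)=m+t+2$ is $O(m)$, and we are done.

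The remaining, and genuinely hard, case is when there are fewer than $2(t+1)$ occupied diagonals of length exceeding $n/2$. The easy bounds then give $k<2\min(a,b)(t+1)\le(m-t)(t+1)$, which forces $t=\Theta(m)$ and, with more work, pins the queens down to a very rigid configuration: essentially one of the hexagons of Section~2 sitting near a corner of the board, with $t$ playing the role of the width of the ``long'' diagonal band. No constant can be wasted here, because the hexagon count $\floor{m^{2}/12}$ is exactly tight, so a soft averaging estimate cannot close the gap. Instead I would run a compression/exchange argument: translate the queens toward the corner and re-sort them along the occupied rows, columns and the relevant diagonal band, checking at each step that $k$ does not decrease while the relevant line-count quantities do not increase, until the configuration becomes an honest hexagon — or, in the residue classes $m\equiv3,6,9\pmod{12}$ and the sporadic case $m=10$, one of the slightly larger configurations of Section~2. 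At that point the explicit area computation from Section~2 gives $k\le G(m)$, contradicting $k>G(m)$.

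The main obstacle is precisely this last step: carrying out the compression so that it exactly respects both the $O(m)$ additive slack in the line count and the ``$+1$'' exceptional values in the definition of $G$; a crude version of the argument loses a constant factor and fails to rule out configurations with roughly $m^{2}/8$ queens and few long diagonals, so the reduction to hexagons must be done carefully. I would also treat separately the degenerate regime in which $n$ is not much larger than $m$ (equivalently $k$ close to $n^{2}$): there the board itself is small, most of the length estimates become trivial since one can afford to take almost every line on the board, but the arithmetic of ``$(m+1)n$'' should be checked directly.
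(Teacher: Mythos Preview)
Your proposal is not a proof: you yourself flag the hard case --- few long occupied diagonals --- as unresolved, and the ``compression/exchange to a hexagon'' you sketch is exactly where all the content lies. A crude version does lose the constant (as you note), and it is not clear how any compression argument could be made to respect the $+1$ in $G(m)$ for the exceptional residue classes, since those arise from genuinely different global shapes (four small hexagons, the nine-queen pattern) rather than from a single canonical corner hexagon.

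The paper's route avoids this entirely and is worth knowing. The right parameter is not the number of occupied diagonals of length $>n/2$ but the minimum number $C$ of diagonals (of either slope) needed to \emph{cover} all the queens. By K\"onig's theorem this equals the maximum size of a set of queens pairwise on distinct diagonals; for each such queen the two diagonals through it have combined length at least $n+1$, so one obtains $2C$ occupied diagonals of total length at least $Cn$. Together with the $A$ occupied columns and $B$ occupied rows this already gives $O(M)$ lines of total length $\ge Mn$, where $M=A+B+C$. What remains is a pure incidence problem: show that queens lying on $A$ columns, $B$ rows, and coverable by $C$ diagonals number at most $G(M)$. This is done by partitioning the $A\times B$ grid of row/column intersections into concentric \emph{rings} and observing that any single diagonal (of either slope) meets each ring in at most two points; summing $\min(2C,\,2A+2B+4-8\ell)$ over rings and optimizing over $A+B+C=M$ yields an explicit bound $F(M)$ that matches $G(M)$ except in the residues $2,4,8,10$ modulo $12$, and those are finished by a finer analysis of the ``critical ring'' where the two terms in the minimum coincide. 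No compression or normalization of the queen positions is ever needed.

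In short, the two missing ideas are (i) the diagonal cover number together with K\"onig, which converts the diagonal-length requirement into a clean covering parameter, and (ii) the ring decomposition, which turns the resulting incidence bound into an exact one.
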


Our lower bound will follow immediately from Proposition \ref{mainProp} by noting that each pair of two rows, columns or diagonals intersect in at most 1 point. Therefore the size of the union of the lines in Proposition \ref{mainProp} is the sum of their lengths minus $O(m^2)=O(k)$.

Before we proceed to the more complicated argument, we begin by looking at the simple case where all of the queens are located near the top left corner of the board.

\subsection{The Corner Case}

Suppose that all $k$ queens are located near the upper left corner of the board. If this is the case, then almost all squares attacked by these queens is on either a row, a column, or a long (i.e. downward sloping) diagonal with one of the queens. Furthermore, each such row, column and long diagonal containing a queen will lead to approximately $n$ attacked squares. Therefore, the optimal placement of $k$ queens will be the one that minimizes the number of such lines that are covered by them. To simplify the argument, we will need to turn this question on its head. Instead we ask, given $m$ rows, columns and long diagonals, what is the greatest number of queens that can be placed so that our queens attack only these rows, columns and long diagonals. In other words, so that each selected queen lies at the intersection of three of these lines.

\begin{figure}
\begin{center}
\hfill\includegraphics[scale=0.1]{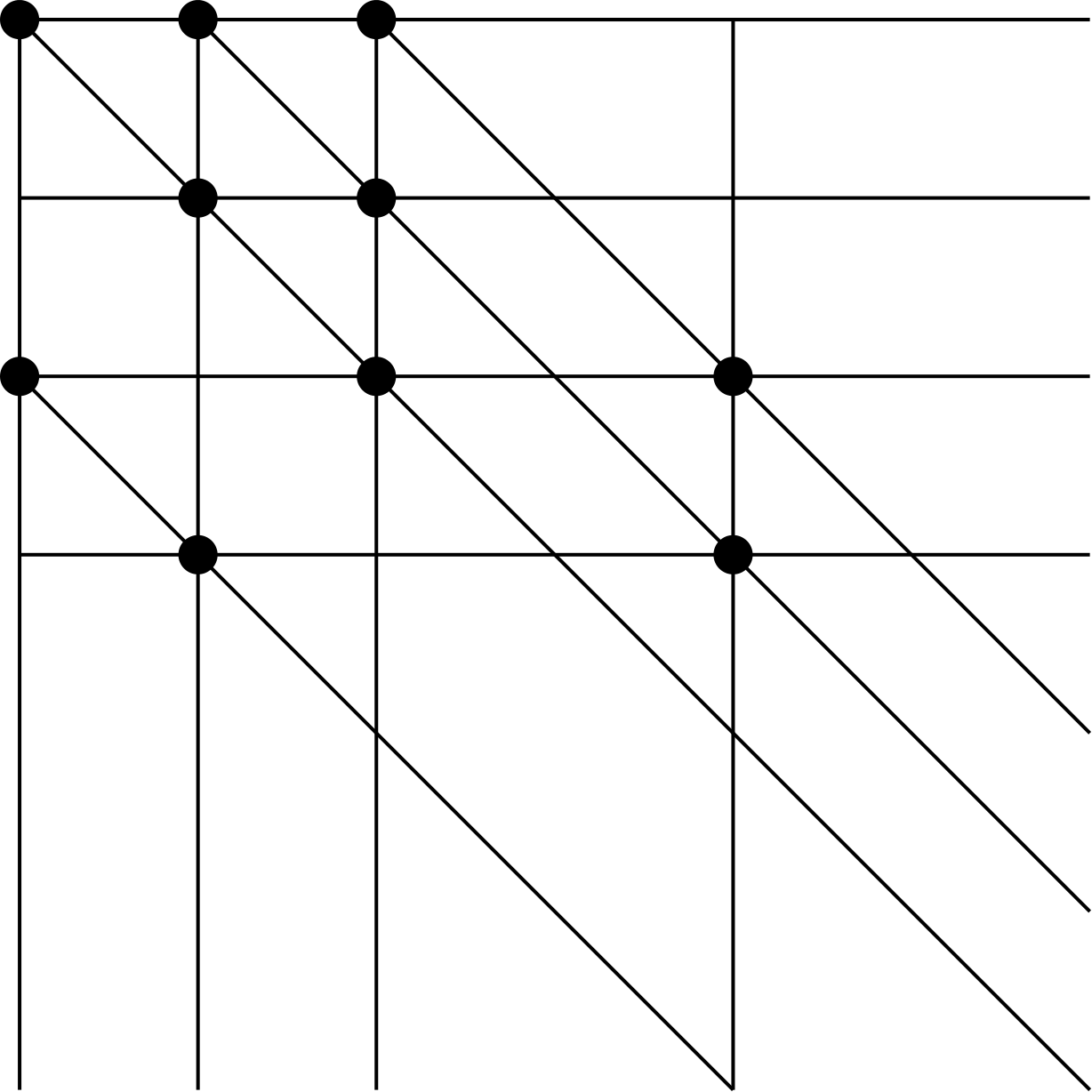}\hfill \includegraphics[scale=0.1]{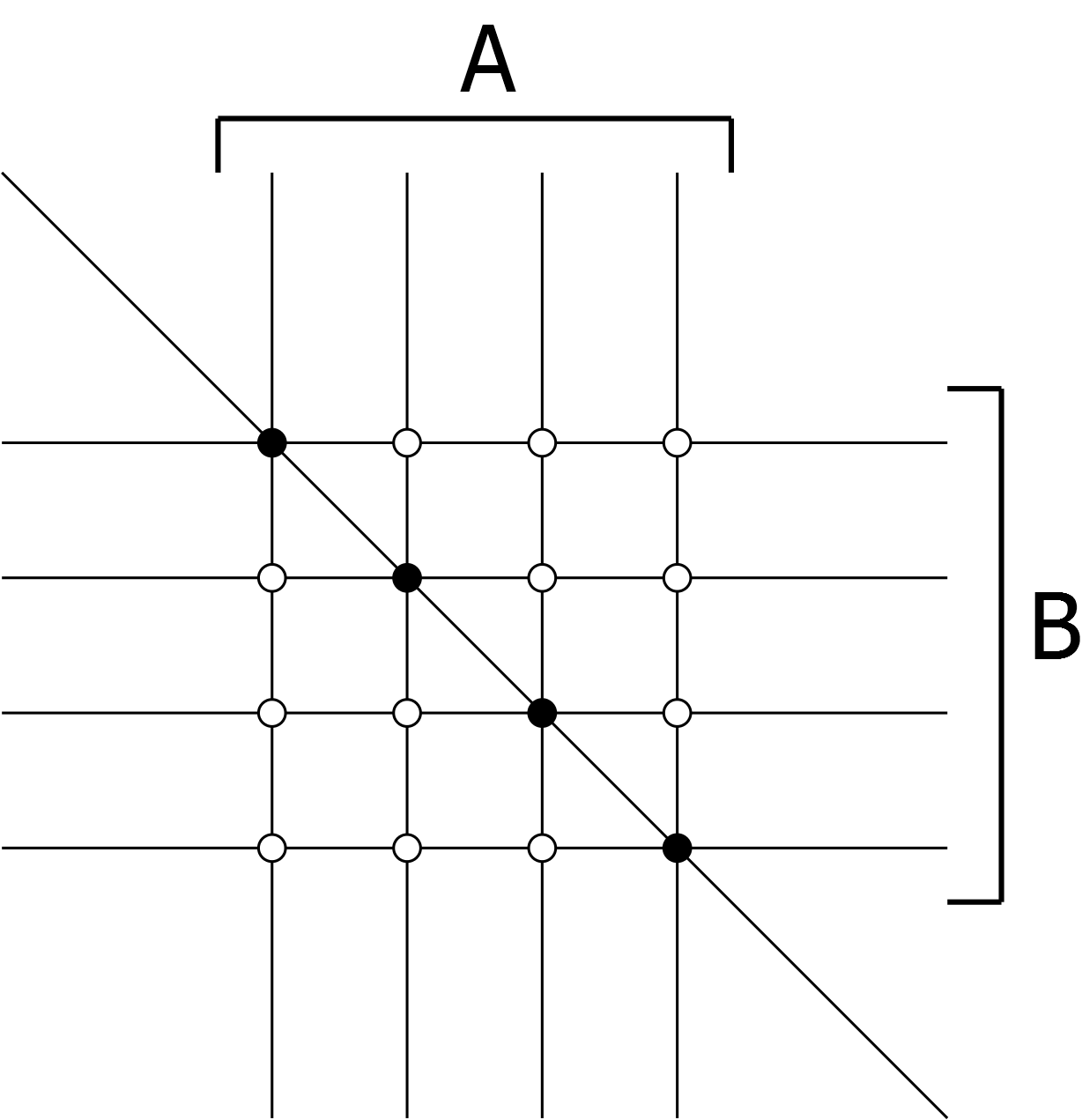}\hfill\phantom{.}
\caption{Left: The queens at the intersection of 4 rows, 4 columns and 4 long diagonals. Right: A diagonal intersecting the grid formed by rows and columns.}\label{cornerIntersectsFig}
\end{center}
\end{figure}

A convenient way to think about this is to first pick a set of $m$ rows, columns and long diagonals, and then look at the number of queens at three-way intersections as shown in Figure \ref{cornerIntersectsFig}. A further refinement of this idea comes by thinking about first fixing the $A$ columns and $B$ rows being used. This yields an $A\times B$ grid of possible queen locations. We then have to intersect this grid with $C$ (with $A+B+C=m$) diagonals and count intersections. This is complicated by the fact that the spacing between rows and columns in our ``grid'' does not need to be regular. A given diagonal could intersect as many as $\min(A,B)$ gridpoints. However, if we merely use this bound on our final answer, it will not be strong enough.

\begin{figure}
\begin{center}
\includegraphics[scale=0.1]{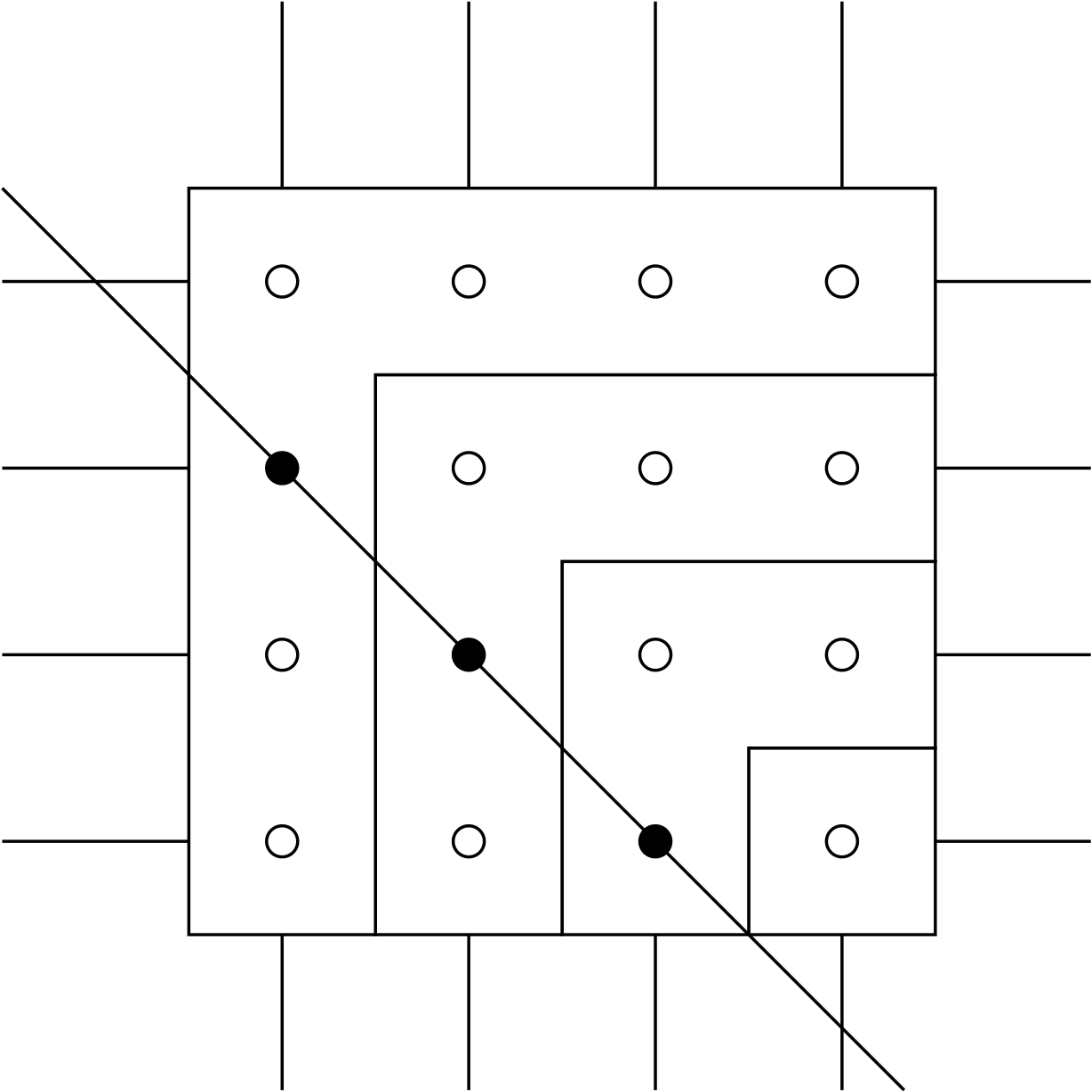}
\caption{Dividing the grid up into L's, and an intersecting diagonal.}\label{LsFig}
\end{center}
\end{figure}

The key idea to solve this issue is to divide the grid up into L's as shown in Figure \ref{LsFig}. One L contains the top row and rightmost column. The next contains the remaining points in the next to top row and next to right column, and so forth. The key thing to note is that given any two grid points in the same L, the sum of their $x$- and $y$- coordinates is not the same, and therefore any long diagonal intersects at most $1$ point in each L.

To complete our analysis, we count the number of points in each L that are on some diagonal. The $\ell^{th}$ L from the outside has a total of $A+B-2\ell+1$ grid points in it, so there are at most this many. On the other hand, since each long diagonal can cover at most one grid point in each L, the total number of points covered in each L is at most $C$. Therefore, the number of points at the intersection of a row, column, and diagonal is at most
$$
\sum_{\ell=1}^{\min(A,B)} \min(C,A+B+1-2\ell).
$$
With a little bit more work, it can be shown that this is largest when $A,B,$ and $C$ are as close to each other as possible, giving a bound of $k \leq \floor{\frac{m^2+3}{12}},$ which is exactly what we achieve with our hexagon construction.

\subsection{The General Argument}

The general case is somewhat more complicated. While one can still try similar methods to bound the number of rows, columns and diagonals attacked by our queens, we run into the difficulty that it is no longer the case that all relevant diagonals will be of the same length. In particular, showing that the queens lie on many diagonals will not necessarily imply that they can attack many different squares on these diagonals. In order to make the analysis work, we will need to show that at least some of these diagonals are long. In particular, we note that if you consider both diagonals through a single queen that the sum of the lengths is always reasonable. Along these lines, we will make use of the following Lemma:

\begin{figure}
\begin{center}
\hfill\includegraphics[scale=0.1]{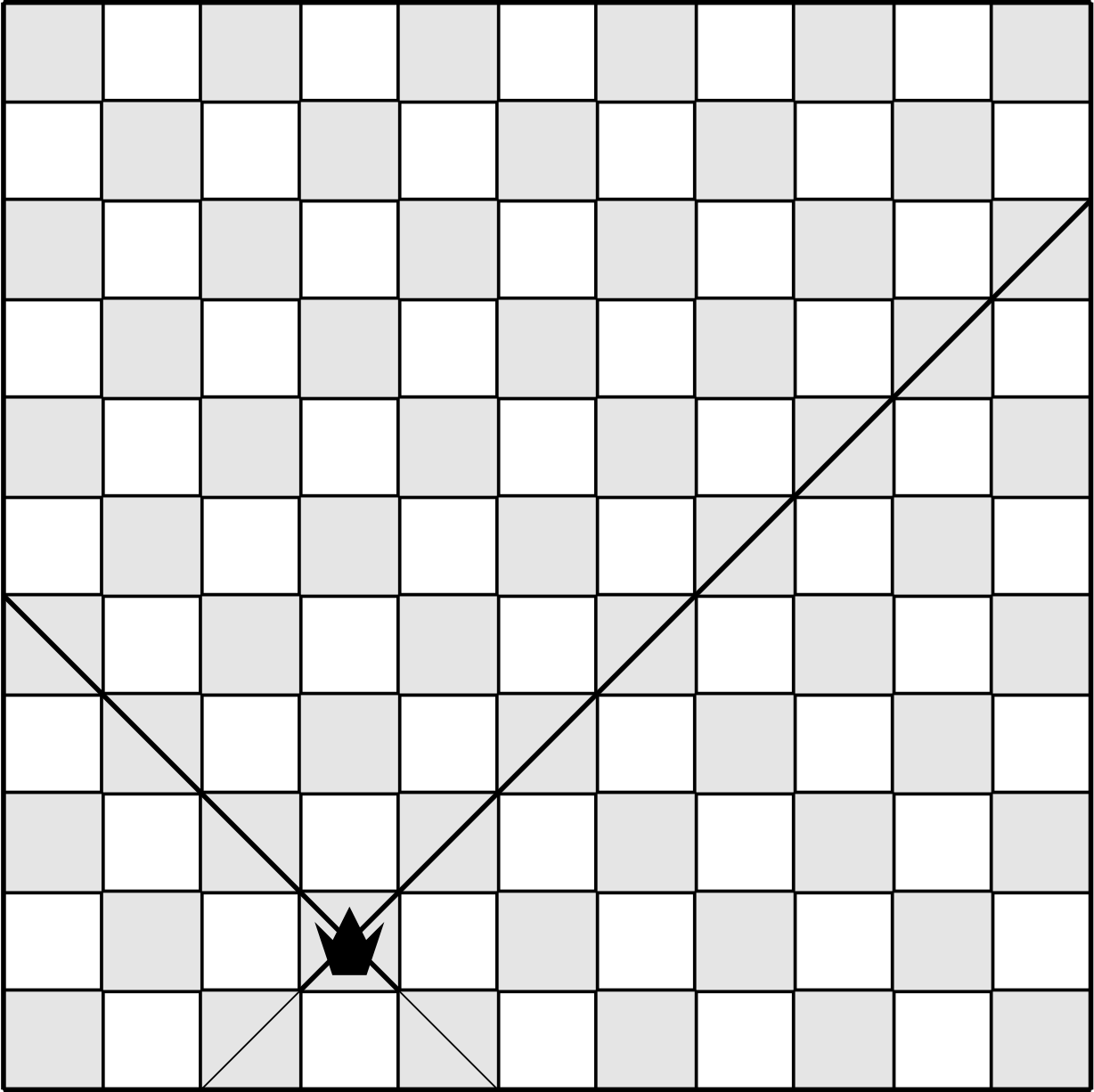} \hfill\includegraphics[scale=0.1]{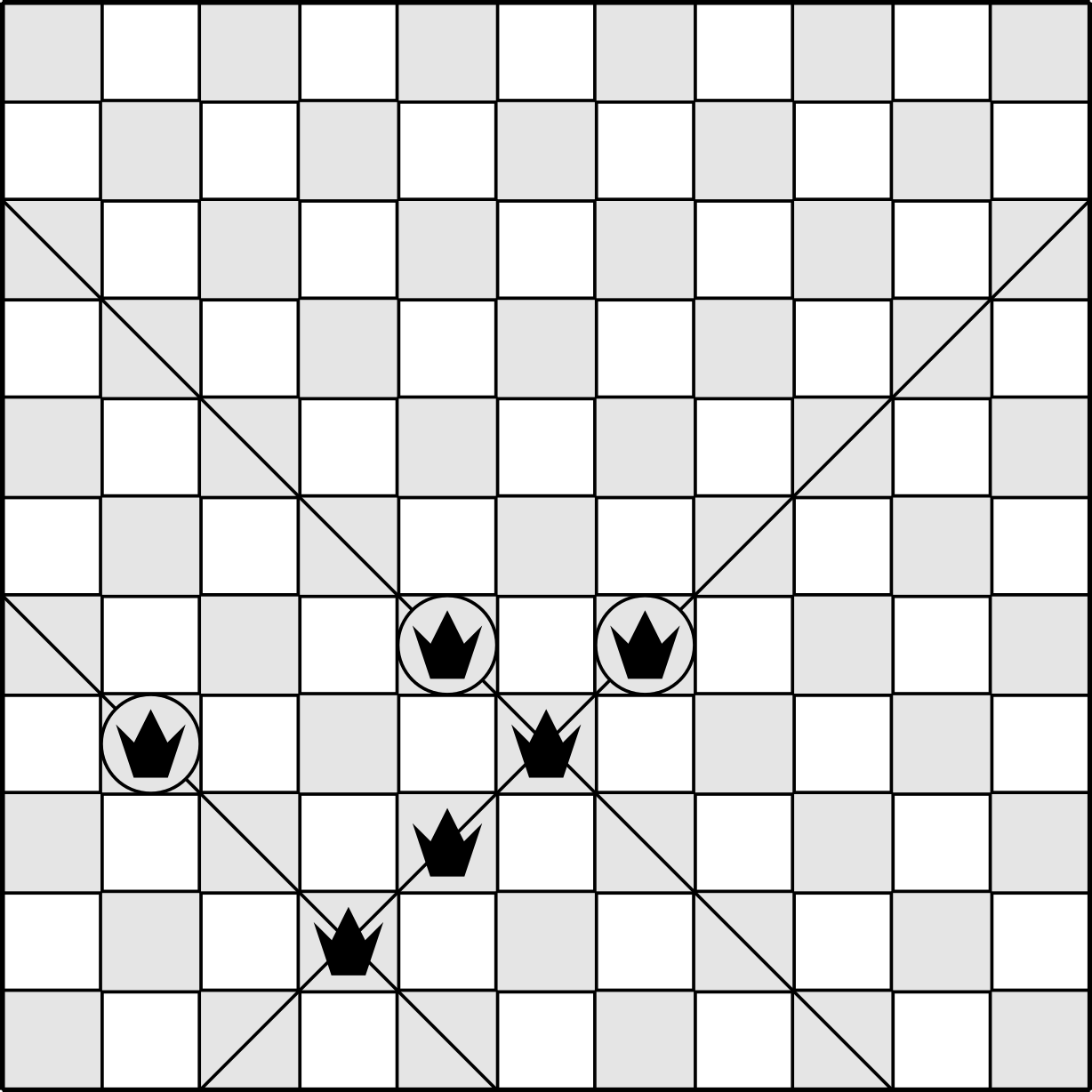}\hfill\phantom{.}
\caption{Left: The diagonals passing through a single queen. Note that the sum of the lengths of the darker segments is one more than the length of the board. Right: Three diagonals covering all queens, and three queens that do not share diagonals.}\label{onequeendiagonalsFig}
\end{center}
\end{figure}

\begin{lem}\label{twoDiagonalsLem}
Given any queen, the sum of the lengths of the diagonals passing through it is $n+1$ plus twice the distance to the nearest side of the board.
\end{lem}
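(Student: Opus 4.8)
The plan is to introduce coordinates and reduce to a single case by symmetry. Place the queen at $(x,y)$ with $1\le x,y\le n$. There are exactly two diagonals through it: the \emph{difference diagonal}, consisting of all cells $(x',y')$ with $x'-y'=x-y$, and the \emph{sum diagonal}, consisting of all cells with $x'+y'=x+y$. Counting the integer points of a line segment clipped to the square $[1,n]^2$ gives that the difference diagonal has length $n-|x-y|$ and the sum diagonal has length $\min(x+y-1,\,2n+1-x-y)$. Also, the distance from the queen to the nearest side of the board is $d=\min(x-1,\ n-x,\ y-1,\ n-y)$, which is $0$ exactly when the queen lies on the boundary.

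Next I would invoke the dihedral group of symmetries of the board, generated by $x\mapsto n+1-x$, $y\mapsto n+1-y$, and $x\leftrightarrow y$. Each of these maps the set of two diagonals through $(x,y)$ to the set of two diagonals through the image cell, either preserving both lengths or interchanging them (the reflections $x\mapsto n+1-x$ and $y\mapsto n+1-y$ swap difference diagonals with sum diagonals), so in every case the \emph{sum} of the two lengths is unchanged; the quantity $d$ is manifestly unchanged as well. Hence we may assume without loss of generality that $x\le y$ and $x+y\le n+1$. Under these two inequalities one checks at once that $x-1$ is the minimum of $x-1,n-x,y-1,n-y$: indeed $x\le y$ gives $x-1\le y-1$, $x+y\le n+1$ gives $x-1\le n-y$, and $2x\le x+y\le n+1$ gives $x-1\le n-x$. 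Therefore $d=x-1$ in this case.

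Finally, in this regime the two length formulas simplify: since $y\ge x$ the difference diagonal has length $n-(y-x)$, and since $x+y\le n+1$ the sum diagonal has length $x+y-1$. Adding, the total is $(n-y+x)+(x+y-1)=n+2x-1=(n+1)+2(x-1)=n+1+2d$, as claimed. There is no real obstacle in this argument; the only things requiring care are getting the clipped–segment lengths right and using the symmetry to collapse what would otherwise be an eightfold case split into one. (The identity can also be read off pictorially from Figure \ref{onequeendiagonalsFig}, by noting how far the diagonal rays toward the nearer sides extend, but the coordinate computation is the cleanest route to a rigorous proof.)
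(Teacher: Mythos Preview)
Your proof is correct and follows essentially the same approach as the paper: reduce by symmetry to a single case determined by which side is nearest, then add the two diagonal lengths directly. The only cosmetic difference is that the paper phrases the computation geometrically (the two half-diagonals pointing away from the nearest side together hit each column once, contributing $n+1$, while the two half-diagonals toward the nearest side each have length $d$), whereas you write down the closed-form lengths $n-|x-y|$ and $\min(x+y-1,\,2n+1-x-y)$ and simplify; the content is the same.
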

\begin{proof}
Assume without loss of generality that the queen is as close to the bottom side of the board as any other. Then the diagonal going up-left from up reaches the left side of the board and the diagonal going up-right reaches the right side. Therefore, the sum of the lengths of these two half-diagonals is $n+1$ as they overlap in one square but otherwise intersect each column exactly once. The other halves of these diagonals each have length equal to the distance to the nearest edge of the board, so the sum of lengths is $n+1$ plus twice this distance.
\end{proof}

Therefore, if we look at the diagonals through a given queen, we know that the sum of their lengths must be at least $n$. Next, given our configuration of many queens, we will need to show that the sum of the lengths of all diagonals through them is significant. One way to do this would be by finding many queens so that no two of them shared a diagonal. This gives us a problem where we need to find the greatest number of such queens that do not share a diagonal. This can be thought of as a maximum matching problem between the diagonals of the two orientations. We find another characterization of this using K\"onig's Minimax Theorem. The final result we want is the following:

\begin{lem}\label{diagonalCoverLem}
Consider a set $S$ of queens on the board. Unless there is a collection of $C-1$ diagonals so that each queen in $S$ is on one of these diagonals, then there exists a set of $2C$ diagonals passing through queens in $S$ the sum of whose lengths is at least $nC$.
\end{lem}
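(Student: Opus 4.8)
The plan is to translate the statement into a covering/matching duality for a bipartite graph and then apply König's minimax theorem together with Lemma \ref{twoDiagonalsLem}. Build the bipartite graph $H$ whose two vertex classes are, respectively, the ``positive'' (say, up-and-to-the-right) diagonals meeting $S$ and the ``negative'' diagonals meeting $S$, with one edge for each queen $q\in S$ joining the positive diagonal through $q$ to the negative diagonal through $q$. With this dictionary, a collection of diagonals such that every queen of $S$ lies on one of them is exactly a vertex cover of $H$; hence the hypothesis that no collection of $C-1$ diagonals covers $S$ says precisely that the minimum vertex cover of $H$ has size at least $C$.

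Next I would invoke König's theorem: since the minimum vertex cover of the bipartite graph $H$ has size at least $C$, it has a matching of size at least $C$. Unwinding the dictionary, such a matching is a set of queens $q_1,\dots,q_C\in S$, no two of which share a positive diagonal and no two of which share a negative diagonal; they therefore lie on $2C$ pairwise distinct diagonals (the $C$ positive diagonals are distinct among themselves, likewise the $C$ negative ones, and a positive diagonal is never a negative diagonal). Finally, by Lemma \ref{twoDiagonalsLem}, for each $i$ the two diagonals through $q_i$ have total length at least $n+1$, so summing over the $C$ queens, the total length of all $2C$ of these diagonals is at least $C(n+1)\geq nC$, which is the desired conclusion.

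I do not expect a genuine obstacle here: the substance of the lemma is entirely in choosing the right bipartite model, after which König and Lemma \ref{twoDiagonalsLem} do all the work. The only points deserving a word of care are verifying that the $2C$ diagonals coming from the matching really are distinct (immediate from the matching property together with the fact that the two diagonal orientations never coincide for $n\geq 2$) and noting that, since the distance to the nearest side in Lemma \ref{twoDiagonalsLem} is nonnegative, the per-queen bound of $n+1$ is safely at least $n$.
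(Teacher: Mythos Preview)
Your proposal is correct and is essentially identical to the paper's own proof: both build the bipartite graph on positive versus negative diagonals with queens as edges, invoke K\"onig's theorem to turn the ``no $(C-1)$-diagonal cover'' hypothesis into a matching of size $C$, and then sum Lemma~\ref{twoDiagonalsLem} over the matched queens. Your write-up is in fact slightly more careful about the distinctness of the $2C$ diagonals and the $n+1\geq n$ slack than the paper's version.
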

\begin{proof}
Call a diagonal positive if it goes from down-left to up-right and negative otherwise. Consider the collection of all positive and negative diagonals passing through any of our queens. Consider two of them to be connected if there is a queen in $S$ on the intersection of those diagonals. Let $M$ be the maximum possible number of queens that we can select from $S$ so that no two share a common diagonal. By Lemma \ref{twoDiagonalsLem}, it suffices to show that $M\geq C$.

Notice that these connections between the diagonals, give them the structure of a bipartite graph, $G$, and that a collection of queens not sharing a diagonal is a matching on $G$. Therefore, $M$ is the size of a maximum matching of $G$. However, by K\"onig's Minimax Theorem the size of this maximum matching is the same as the size of a minimal vertex cover. In this case that means the minimum number of diagonals needed so that every queen in $S$ is on one of the chosen diagonals. Therefore unless all there is a collection of $C-1$ diagonals so that each queen in $S$ in one one of them, we will have $M\geq C$.
\end{proof}

We are now ready to prove Proposition \ref{mainProp}. The basic strategy will be the same as in the simple case where all queens were in a corner, except that we will need to count diagonals in both directions.

In particular, consider a set $S$ of $k$ queens. Let $A$ be the number of columns passing through queens in $S$, and let $B$ be the number of rows. Let $C$ be the smallest integer so that the queens in $S$ can be covered by $C$ diagonals. Let $M=A+B+C$. We note that by using the diagonals mentioned in Lemma \ref{diagonalCoverLem} that there exists a set of $O(M)$ rows, columns and diagonals all passing through queens in $S$ the sum of whose lengths is at least $nM$. It suffices to prove that $k\leq G(M)$.

Thus, we are left with the problem of finding the best way to select $A$ columns, $B$ rows and $C$ diagonals (now of either orientation), so that $A+B+C=M$ and so that the number of squares at the intersection of a selected row, a selected column, and a selected diagonal is as large as possible. In particular, we would like to show that if we fix a set of $A$ columns, $B$ rows, and $C$ diagonals with $A+B+C\leq M$, that the set $S$ of points lying on at least one chosen row, one chosen column, and one chosen diagonal satisfies $|S|\leq G(M)$.

We let the $x$-coordinates of the columns be $x_1<x_2<\cdots < x_A$ and let the $y$-coordinates of the rows be $y_1<y_2<\cdots<y_B$. We consider the grid $I:= \{(x_i,y_j):1\leq i\leq A,1\leq j \leq B\}$ defined by these rows and columns, and note that $S\subseteq I$.

We next partition the set $I$ into \emph{rings}, similarly to how we defined L's before. We begin with the set of points on the outermost edge in any direction. Next, we consider the points one set inward from that, and so on. Formally, we let the $\ell^{th}$ ring consist of points $(x_i,y_j)$ where $\min(i,j,A+1-i,B+1-j)=\ell$. Thus, consisting of the set of points $(x_\ell,y_\ell),(x_\ell,y_{\ell+1}),\ldots,(x_\ell,y_{B+1-\ell}),$ $(x_{\ell+1},y_{B+1-\ell}),\ldots,(x_{A+1-\ell},y_{B+1-\ell}),$ $(x_{A+1-\ell},y_{B-\ell}),\ldots,(x_{A+1-\ell},y_\ell),$\\ $(x_{A-\ell},y_\ell),\ldots,(x_{\ell+1},y_\ell).$

We note that each ring can be partitioned into two sequences on each of which $x+y$ is guaranteed to be strictly increasing. In particular, the above ring can be partitioned into
$(x_\ell,y_\ell),(x_\ell,y_{\ell+1}),\ldots,(x_\ell,y_{B+1-\ell}),$ $(x_{\ell+1},y_{B+1-\ell}),\ldots,(x_{A+1-\ell},y_{B+1-\ell}),$ and \\ $(x_{\ell+1},y_\ell),$ $(x_{\ell+2},y_\ell),\ldots,(x_{A+1-\ell},y_\ell)$ $,(x_{A+1-\ell}$ $,y_{\ell+1}),\ldots,(x_{A+1-\ell},y_{B-\ell}).$ This means that each negatively oriented line can only intersect each ring in at most $2$ points. Furthermore, an analogous argument says that each positively oriented line can only intersect each ring in at most $2$ points.

\begin{figure}
\begin{center}
\includegraphics[scale=0.1]{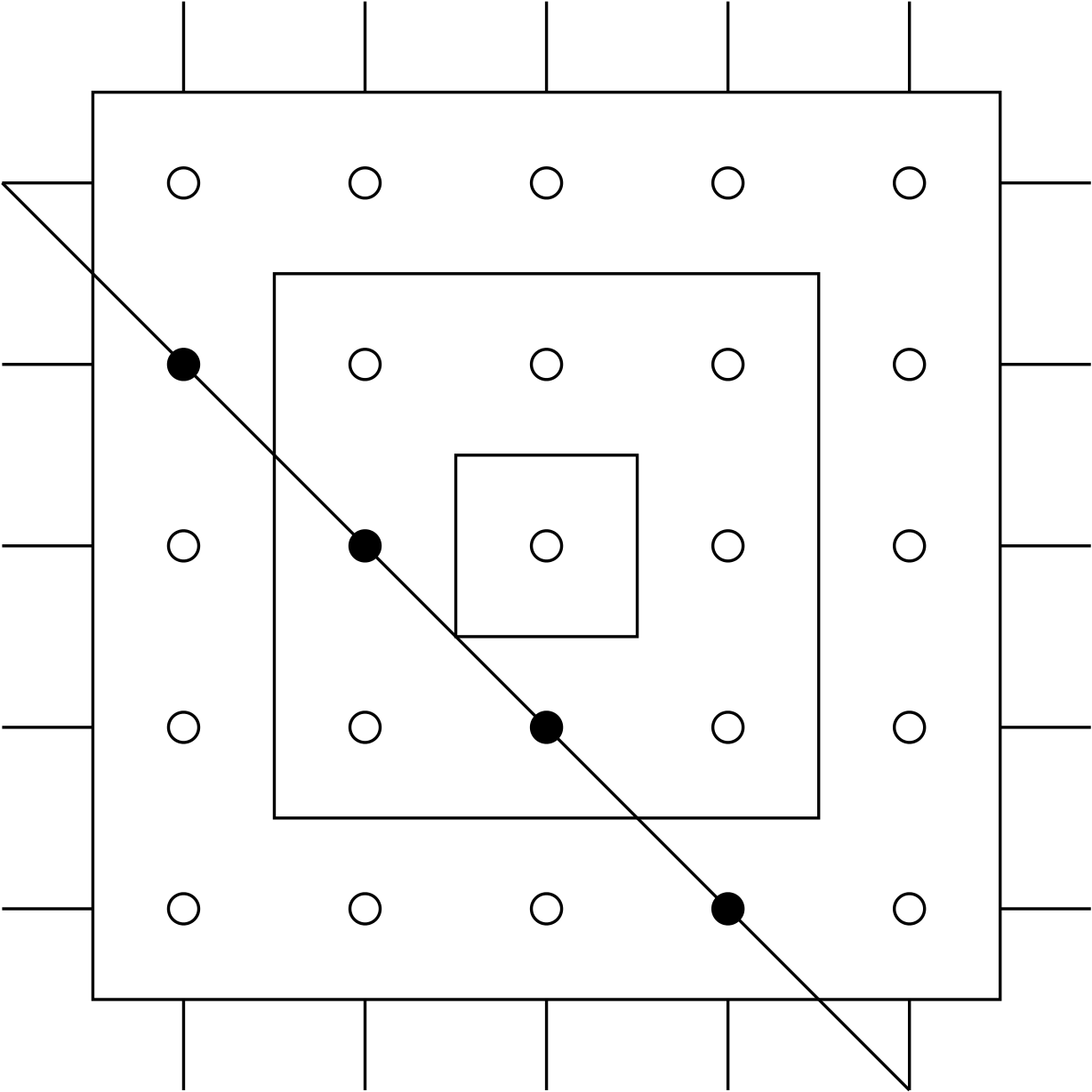}
\caption{The grid divided up into rings and one diagonal.}\label{ringsFig}
\end{center}
\end{figure}

Hence, we arrive at the following important result:
\begin{claim}
Any diagonal intersects each ring in ay most $2$ points.
\end{claim}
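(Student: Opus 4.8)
The plan is to reduce to the two slope classes and handle each with a staircase decomposition. A diagonal is a line of slope $+1$ or $-1$: a positively oriented diagonal is a level set of the function $(x,y)\mapsto x-y$, and a negatively oriented diagonal is a level set of $(x,y)\mapsto x+y$. It therefore suffices to show separately that a negative diagonal meets each ring in at most $2$ points and that a positive diagonal does.

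For negative diagonals I would use precisely the decomposition already exhibited above: the $\ell^{th}$ ring is covered by the two sequences
\[
(x_\ell,y_\ell),(x_\ell,y_{\ell+1}),\ldots,(x_\ell,y_{B+1-\ell}),(x_{\ell+1},y_{B+1-\ell}),\ldots,(x_{A+1-\ell},y_{B+1-\ell})
\]
and
\[
(x_{\ell+1},y_\ell),\ldots,(x_{A+1-\ell},y_\ell),(x_{A+1-\ell},y_{\ell+1}),\ldots,(x_{A+1-\ell},y_{B-\ell}).
\]
Passing from one point to the next along either sequence increments exactly one of the two indices, and since $(x_i)$ and $(y_j)$ are strictly increasing, $x+y$ strictly increases along the sequence; hence a level set of $x+y$ contains at most one point of each sequence, so at most $2$ points of the ring.

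For positive diagonals I would build the analogous decomposition, cutting the ring at the two corners where $x-y$ is extremal, namely the top-left corner $(x_\ell,y_{B+1-\ell})$ and the bottom-right corner $(x_{A+1-\ell},y_\ell)$. This gives the two sequences
\[
(x_\ell,y_{B+1-\ell}),(x_{\ell+1},y_{B+1-\ell}),\ldots,(x_{A+1-\ell},y_{B+1-\ell}),(x_{A+1-\ell},y_{B-\ell}),\ldots,(x_{A+1-\ell},y_\ell)
\]
(top edge left-to-right, then right edge top-to-bottom) and
\[
(x_\ell,y_{B-\ell}),(x_\ell,y_{B-1-\ell}),\ldots,(x_\ell,y_\ell),(x_{\ell+1},y_\ell),\ldots,(x_{A-\ell},y_\ell)
\]
(left edge top-to-bottom, then bottom edge left-to-right). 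Along the first sequence, at every step either $x$ strictly increases or $y$ strictly decreases, so $x-y$ strictly increases; along the second, first $y$ strictly decreases and then $x$ strictly increases, so again $x-y$ strictly increases. Hence a level set of $x-y$ meets each sequence at most once, so the ring at most twice, and the claim follows by combining the two cases.

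The argument is essentially bookkeeping, and the only point requiring care is the behaviour of the innermost rings: when $\min(A,B)$ is small relative to $\ell$ a ``ring'' can degenerate to a single row, a single column, two adjacent rows or columns, or a single point. In each such case the ring consists of at most two sequences along each of which both $x$ and $y$ are monotone, so both $x+y$ and $x-y$ are monotone and the bound of $2$ holds trivially; I would dispatch all of these with one sentence. One should also note that the two sequences in each decomposition are permitted to share (or, as it happens here, to partition) the ring, so that they genuinely cover it; since only an upper bound is wanted, any overlap would be harmless anyway. I do not anticipate any real obstacle — the content of the claim lies entirely in the staircase idea, which is already in hand.
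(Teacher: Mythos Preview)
Your proposal is correct and follows exactly the paper's approach: the paper proves the claim in the paragraph immediately preceding it by splitting each ring into two monotone staircases for $x+y$ and then invoking ``an analogous argument'' for the positively oriented diagonals, which is precisely the decomposition you have written out explicitly. Your treatment of the degenerate innermost rings is a minor elaboration but in the same spirit.
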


Therefore, the number of points in the $\ell^{th}$ ring that lie on one of our $C$ diagonals is at most $2C$.

Note that the number of points in the $\ell^{th}$ ring is $2A+2B+4-8\ell$ if $A,B\geq 2\ell$, $0$ if either is less than $2\ell-1$ and $\max(A,B)-2(\ell-1)$ if $\min(A,B)=2\ell-1$. Notice that in any case that this implies that the number of points in the $\ell^{th}$ ring is at most $\max(2A+2B+4-8\ell,0)$ unless $A=B=2\ell-1$, in which case there is one point (instead of $0$). The number of points of $S$ in the $\ell^{th}$ ring is also at most $2C$ by the above claim, so summing over $\ell$ we find that
\begin{equation}\label{boundEqn}
|S| \leq \sum_{\ell=1}^{\floor{\frac{A+B+2}{4}}} \min(2C,2A+2B+4-8\ell) +\delta_{A+B}= f(A+B,C)+\delta_{A+B}.
\end{equation}
Where $\delta_{A+B}$ is $1$ if $A+B\equiv 2\pmod{4}$ and $0$ otherwise.
Therefore, we have that $|S|$ is at most
$$
F(M) := \max_{A,C\in\Z_+,A+C=M} f(A,C)+\delta_A.
$$
We proceed to compute this value.

The biggest difficulty here is determining the values of $A$ and $C$ that achieve the maximum value of $f(A,C)+\delta_A$. In order to do this, we note that it is not hard to figure out what happens if we increase $A$ by $1$ and decrease $C$ by $1$ or visa versa. At the maximum value, neither of these operations can lead to an increase.

First, we note that as long as $C\leq A+2$ that
$$
f(A+1,C) = f(A,C) + 2\left(\floor{\frac{A+2}{4}} - \floor{\frac{A-C+2}{4}} \right).
$$
This is because increasing $A$ by $1$ increases each of the summands with $\floor{\frac{A-C+2}{4}} < \ell \leq \floor{\frac{A+2}{4}}$ by $2$, leaves the others unchanged, and possibly introduces a new summand of value $0$.

We also see that as long as $C\leq A+1$ that
$$
f(A,C+1) = f(A,C) + 2\floor{\frac{A-C+1}{4}}.
$$
This is because each term with $\ell \leq \floor{\frac{A-C+1}{4}}$ is increased by $2$.

Combining the above we find that for $C\leq A$
\begin{align*}
f(A+1,C-1) & = f(A,C-1) + 2\left(\floor{\frac{A+2}{4}} - \floor{\frac{A-C+3}{4}} \right) \\ & = f(A,C) + 2\left(\floor{\frac{A+2}{4}} - \floor{\frac{A-C+3}{4}} - \floor{\frac{A-C+2}{4}}\right).
\end{align*}
Similarly, we have that
\begin{align*}
f(A-1,C+1) & = f(A-1,C) + 2\floor{\frac{A-C}{4}} \\ & = f(A,C) + 2\left(\floor{\frac{A-C}{4}}+\floor{\frac{A-C+1}{4}}-\floor{\frac{A+1}{4}} \right).
\end{align*}

We claim that $F$ is optimized by the unique $A$ so that $2(M-A)+2 \geq A\geq 2(M-A),$ unless $M\equiv -2\pmod{12}$, in which case the optimum is attained when $A=2(M-A)-2$, or when $M\equiv 8\pmod{12}$, in which case $A=2(M-A)-1$. In particular, we show that for smaller $A$ increasing $A$ by $1$ and decreasing $C$ by $1$ doesn't decrease $f(A,C)+\delta_A$, while if $A$ is larger, then decreasing $A$ by $1$ and increasing $C$ by $1$ does this. This can be verified directly from the above by a somewhat tedious computation.

Hence, we have that $F$ is optimized when $C=\floor{M/3}$ unless $M$ is $10$ or $8$ modulo $12$, in which case it is $\floor{M/3}+1$. Therefore,
$$
F(M)=f(M-C,C)+\delta_{M-C}.
$$
For the appropriate value of $C$.

We now compute $f(M)$ by conditioning on its congruence class mod 12. It is not hard to show that
$$
F(M)=\begin{cases}\floor{\frac{M^2}{12}} & \textrm{ if }M\equiv 0,1,5,7,11 \pmod{12} \\ & \\ \floor{\frac{M^2}{12}}+1 & \textrm{ if }M\equiv 2,3,4,6,8,9,10 \pmod{12} \end{cases}.
$$

We therefore, have that any set of more than $F(M)$ queens will have $O(M)$ lines passing through these queens containing a total of at least $(M+1)n$ squares. We note that this is already sufficient to establish our lower bound except in the cases where $M$ is congruent to $2,4,8$ or $10$ modulo $12$, in which case it might be possible to fit in one extra queen without attacking too many squares. In order to handle these cases, we will need a separate argument to show that in the unusual case that we can pick $A$ columns, $B$ rows and $C$ diagonals with $A+B+C=M$ so that each of $G(M)+1$ queens lie at the intersection of three of these lines, that we will still attack too many squares.

We begin by noting that in each of these cases, there is a unique set of values of $A$ and $C$ that optimize $F$. This can again be verified directly by a somewhat tedious examination of cases.

Therefore, in any of the cases any configuration of $F(M)$ queens that do not lie on $O(M)$ diagonals with total length at least $(M+1)n$ must have some $A$ columns, $B$ rows, and $C$ diagonals so that $A+B+C=M$, $A+B$ is $8s+2,8s+3,8s+5$, or $8s+6$ depending on case with $A$ and $B$ differing by at most 1 so that the $F(M)$ points are covered by the $A$ columns, covered by the $B$ rows and covered by the $C$ diagonals. Call these $C$ diagonals the \emph{designated diagonals}. Furthermore, these covers must be minimal in terms of the number of rows/columns/diagonals used. Furthermore, if we take the $x$-coordinates of the columns to be $x_1<\ldots<x_A$ and the $y$-coordinates of the rows to be $y_1<\ldots<y_B$, then for each $\ell<(2A+2B+4)/8$, the $\ell^{th}$ ring as defined above must contain exactly $\min(2C,2A+2B+4-8\ell)$ of our points.

Note that in each of the above cases, there is an $\ell$ so that $2C=2A+2B+4-8\ell$ (in particular $\ell=s+1$). We call the corresponding ring the \emph{critical ring}. Our analysis from this point on will depend crucially on this critical ring. The first thing to note is that in order for our bound to be tight, each of the $C$ designated diagonals must intersect the critical ring in exactly two points, and correspondingly each point in the critical ring must be on exactly one designated diagonal. Note also that each designated diagonal must intersect each ring further out than the critical ring in exactly two places. We will show that in each of these four cases, that this is impossible unless the sum of the lengths of all the diagonals containing queens in the critical ring is substantially larger than $Cn$.

For notational purposes, we call diagonals positively or negatively oriented depending on the sign of their slope. Consider the point $(x_{\ell},y_{\ell})$ at the corner of the critical ring. We note that the designated diagonal passing through it must be positively oriented for otherwise it could not intersect the critical ring again. As this diagonal must pass through $\ell-1$ other points to the lower left of this one each with distinct $x$ and $y$ coordinates, it must pass through $(x_1,y_1)$. This implies that $x_{\ell}-x_1 = y_\ell-y_1$. Applying this argument to the other corners, we find that $y_\ell-y_1=x_A-x_{A+1-\ell}=y_B-y_{B+1-\ell}$. We call this common difference $D$ as shown in Figure \ref{ddefFig}.

\begin{figure}
\begin{center}
\hfill\includegraphics[scale=0.1]{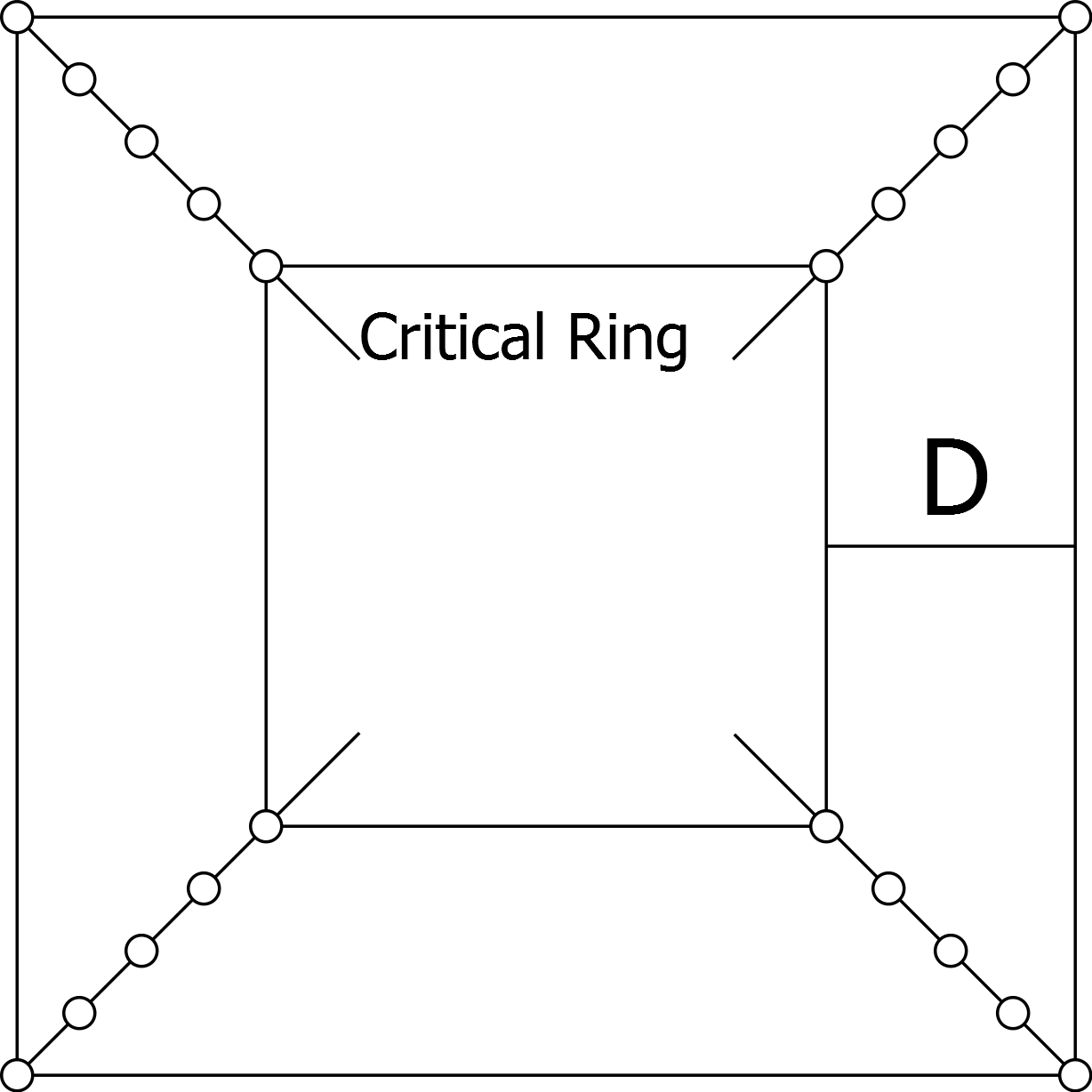}\hfill\includegraphics[scale=0.1]{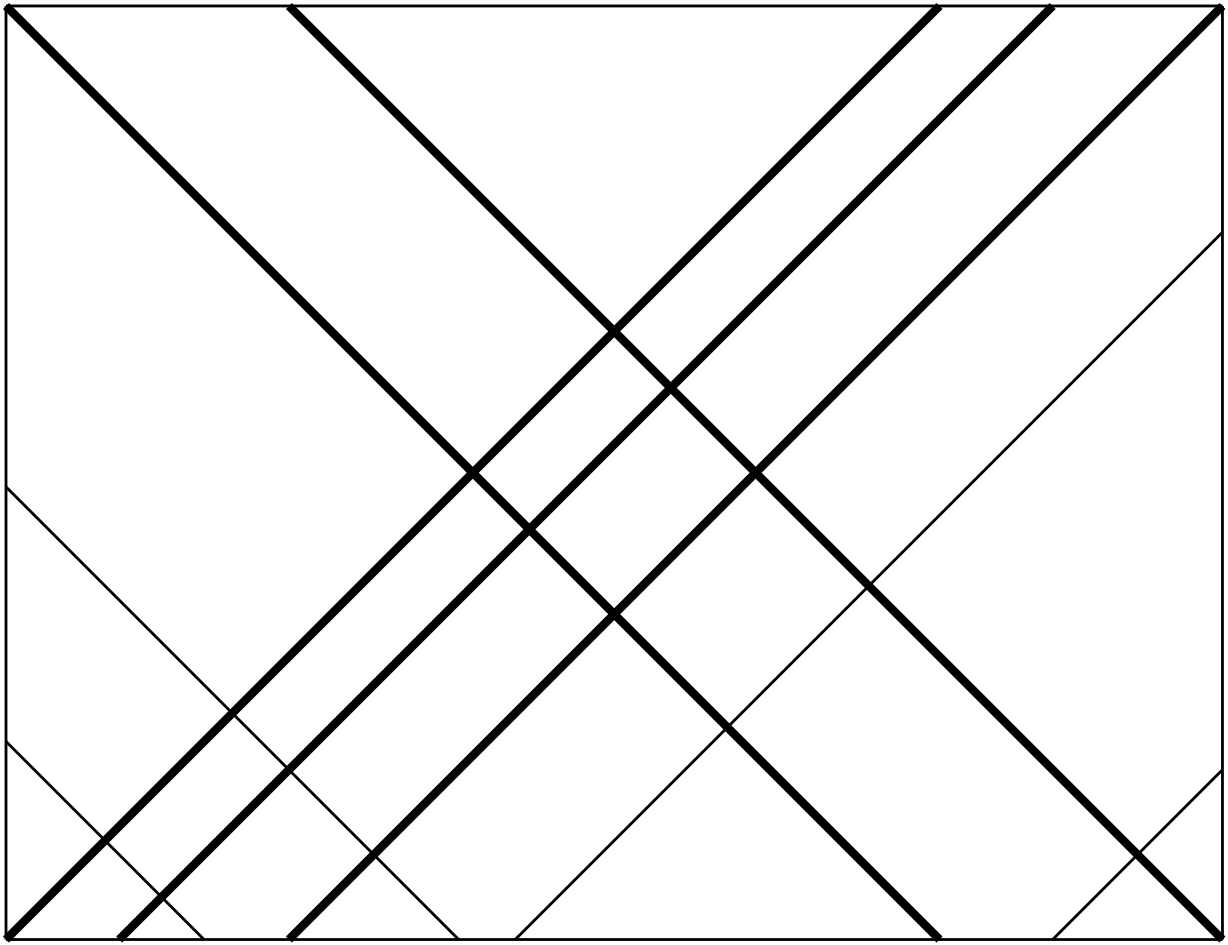}\hfill\phantom{.}
\caption{Left: The grid divided up into rings and one diagonal. Right: The difficulty with a non-square critical ring. Note that the light diagonals all intersect the top/bottom once and one of the sides once. Thus, the number of points on the top/bottom, must exceed the number of the sides by at least $4$.}\label{ddefFig}
\end{center}
\end{figure}

We now consider the critical diagonals passing through the critical ring. We begin by claiming that the critical ring is square, namely that $x_{A-\ell+1}-x_\ell=y_{B-\ell+1}-y_\ell$. If this were not the case, the critical ring would have a long side and a short side. The critical diagonals passing through the corners must each also pass through a point on one of the two long sides. Furthermore, no critical diagonal can connect two points on short sides. Thus, in this case the number of queens on the long sides of the critical ring must exceed the number of queens on the short sides by at least 4, and therefore, we must have $|A-B|> 1$, which is a contradiction.

Given that the critical ring is square, the critical diagonals through the corners must pass through opposite corners. The critical diagonals through the sides of the critical ring must each intersect both a horizontal side and a vertical side. Therefore, the number of queens on each side of the critical diagonal must be the same. Therefore, we must have that $A=B$. This excludes the cases where $M$ is congruent to $4$ or $8$ modulo 12.

For the case where $M=2$, it is trivially noted that one cannot have a single queen on no diagonals. For $M=10$, the larger value of $G(M)$ means that there is nothing to prove (this is the special case of the $9$ queen configuration). Therefore, we may assume that $M\geq 12$ and $M\equiv 2,10\pmod{12}$.

Unfortunately, in these cases, it is possible to find arrangements of $G(M)+1$ queens that lie on only $A$ columns, $B$ rows and $C$ diagonals with $A+B+C=M$. For example, the arrangement in Figure \ref{17Qs} for $M=14$ places $17$ queens on $5$ rows, $5$ columns, and $4$ diagonals. However, any instantiation of this arrangement will necessarily attack far more than $14n$ squares.

\begin{figure}
\begin{center}
\includegraphics[scale=0.4]{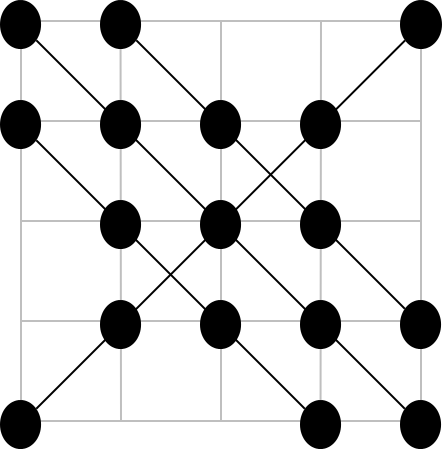}
\end{center}
\caption{ An arrangement of seventeen queens to lie on five rows, five columns and four diagonals.}\label{17Qs}
\end{figure}

However, we claim that in all such cases the sum of the lengths of the diagonals through the points in the critical ring is still at least $(C+1)n$, and thus that we can find $O(M)$ rows, columns and diagonals through selected queens with total length at least $(M+1)n$. To prove this, we will need the following Lemma:

\begin{lem}\label{critDiagLem}
The sum of the lengths of diagonals through points in the critical ring is at least $Cn+2CD$ plus half the sum of the lengths of diagonals that intersect exactly one point in the critical ring.
\end{lem}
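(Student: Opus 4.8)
The plan is to sum Lemma~\ref{twoDiagonalsLem} over the points of the critical ring and then rearrange. Write $R$ for the set of the $2C$ points of the critical ring; for $p\in R$ let $\delta(p)$ be the distance from $p$ to the nearest side of the board and let $\ell^{+}(p),\ell^{-}(p)$ be the lengths of the positively and negatively oriented board-diagonals through $p$. Lemma~\ref{twoDiagonalsLem} (whose proof uses nothing about $p$ being a queen, only that it is a point of the board) gives $\ell^{+}(p)+\ell^{-}(p)=(n+1)+2\delta(p)$ for every $p\in R$, so summing over the $2C$ points,
$$
\sum_{p\in R}\bigl(\ell^{+}(p)+\ell^{-}(p)\bigr)=2C(n+1)+2\sum_{p\in R}\delta(p).
$$

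Next I would reinterpret the left-hand side by grouping the contributions by diagonal. By the Claim that any diagonal meets each ring in at most two points, a diagonal passing through a point of $R$ contains either one or two points of $R$, and it then appears in the sum above with coefficient $1$ or $2$ accordingly. Letting $L_{1}$ and $L_{2}$ denote the total length of the diagonals meeting $R$ in exactly one, respectively exactly two, points, the left-hand side equals $L_{1}+2L_{2}$, while the quantity the lemma asks us to bound below is exactly $L_{1}+L_{2}$. Writing $\Sigma=\sum_{p\in R}\delta(p)$, the identity $L_{1}+2L_{2}=2C(n+1)+2\Sigma$ rearranges to $L_{1}+L_{2}=Cn+C+\Sigma+\tfrac{1}{2}L_{1}$. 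Hence it suffices to prove $\Sigma\ge 2CD-C$, and I will in fact prove the cleaner bound $\Sigma\ge 2CD$.

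To get this, I would show that every point of $R$ lies at distance at least $D$ from every side of the board, so that each of the $2C$ summands of $\Sigma$ is at least $D$. Here one uses the structural facts established just before the lemma, namely $x_{\ell}-x_{1}=x_{A}-x_{A+1-\ell}=y_{\ell}-y_{1}=y_{B}-y_{B+1-\ell}=D$, together with $x_{1},y_{1}\ge 1$ and $x_{A},y_{B}\le n$. A point of $R$ has the form $(x_{i},y_{j})$ with $\ell\le i\le A+1-\ell$ and $\ell\le j\le B+1-\ell$, whence $x_{i}\ge x_{\ell}\ge x_{1}+D\ge 1+D$ and $x_{i}\le x_{A+1-\ell}\le x_{A}-D\le n-D$, and likewise $1+D\le y_{j}\le n-D$; so the distances from $(x_{i},y_{j})$ to all four sides of the board are at least $D$. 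Summing over the $2C$ points of $R$ gives $\Sigma\ge 2CD$, which finishes the proof.

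I expect the only point needing care to be the bookkeeping of the double count --- keeping the factor of $2$ and the additive $+1$ straight --- since both geometric inputs (each diagonal meets a ring at most twice, and a ring point is at distance at least $D$ from the boundary) are immediate from the preceding setup. The same computation in fact gives the stronger bound $Cn+2CD+C+\tfrac{1}{2}L_{1}$, but only the weaker stated form is needed in the sequel.
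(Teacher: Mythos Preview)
Your proof is correct and follows the same double-counting argument as the paper's own proof: rewrite twice the total diagonal length as the point-sum $\sum_{p\in R}(\ell^+(p)+\ell^-(p))$ plus the single-hit contribution $L_1$, then apply Lemma~\ref{twoDiagonalsLem} with $\delta(p)\ge D$. The only difference is that you make explicit the verification $\delta(p)\ge D$ (via $x_\ell-x_1=x_A-x_{A+1-\ell}=y_\ell-y_1=y_B-y_{B+1-\ell}=D$) and the bookkeeping identity $L_1+L_2=\tfrac12(L_1+2L_2)+\tfrac12 L_1$, both of which the paper leaves to the reader.
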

\begin{proof}
The sum of the lengths of these diagonals equals half the sum of twice the lengths of these diagonals. This equals half of the sum over points $p$ in the critical ring of the sum of the lengths of the diagonals through $p$ plus the sum of the lengths of diagonals that intersect the critical ring in exactly one point. The former term is at least $n+2D$ for each $p$ by Lemma \ref{twoDiagonalsLem}. As there are exactly $2C$ queens on the critical ring, this completes the proof.
\end{proof}

Now, we only have to show that either $D$ is large or that the sum of the lengths of diagonals intersecting the critical ring exactly once is large. We note that such diagonals must exist. In particular, there must be at least four \emph{skew diagonals}, that is positive diagonal through the upper left or lower right corner, or the negative diagonal through the lower left or upper right corner as shown in Figure \ref{skewFig}.

\begin{figure}
\begin{center}
\hfill\includegraphics[scale=0.1]{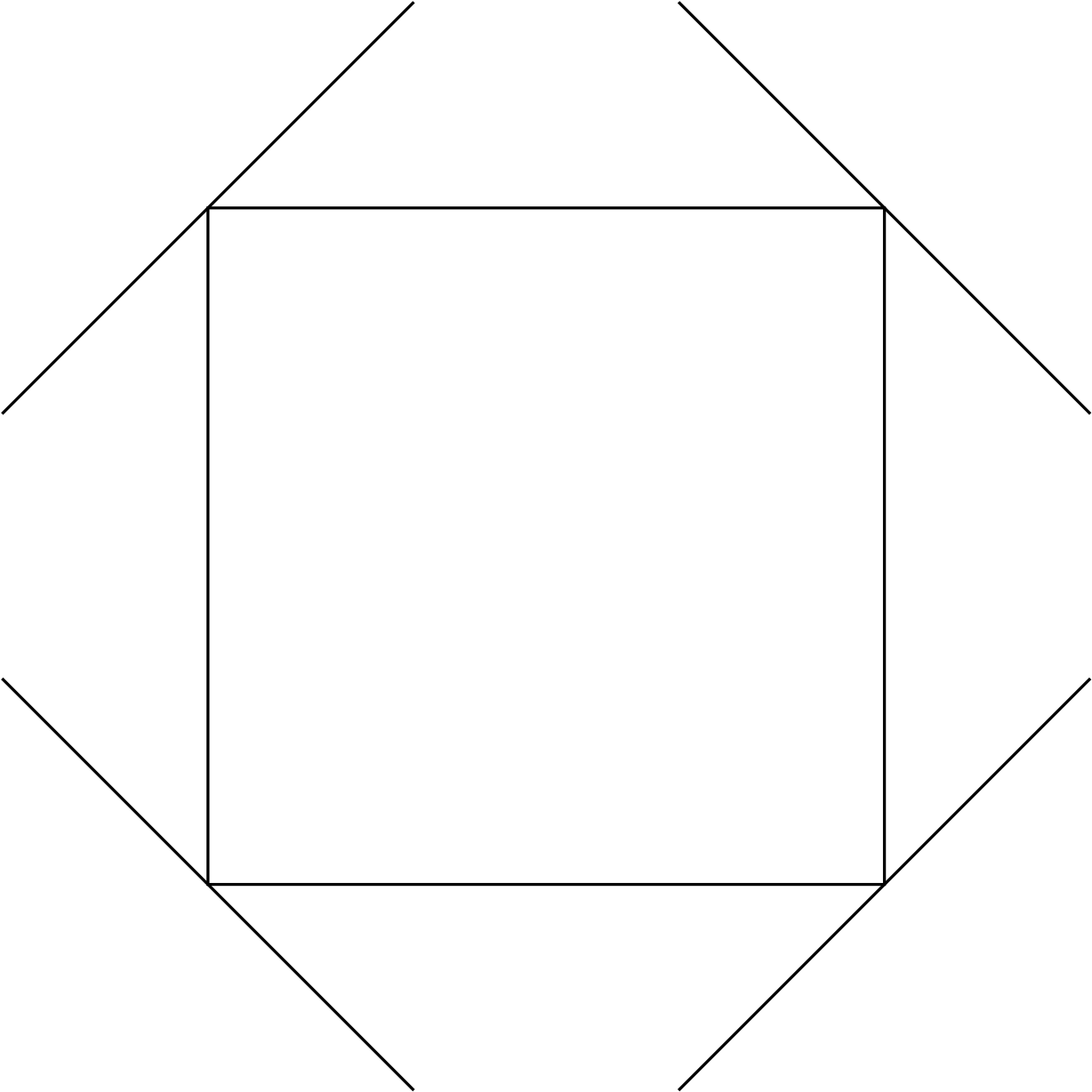}\hfill\includegraphics[scale=0.1]{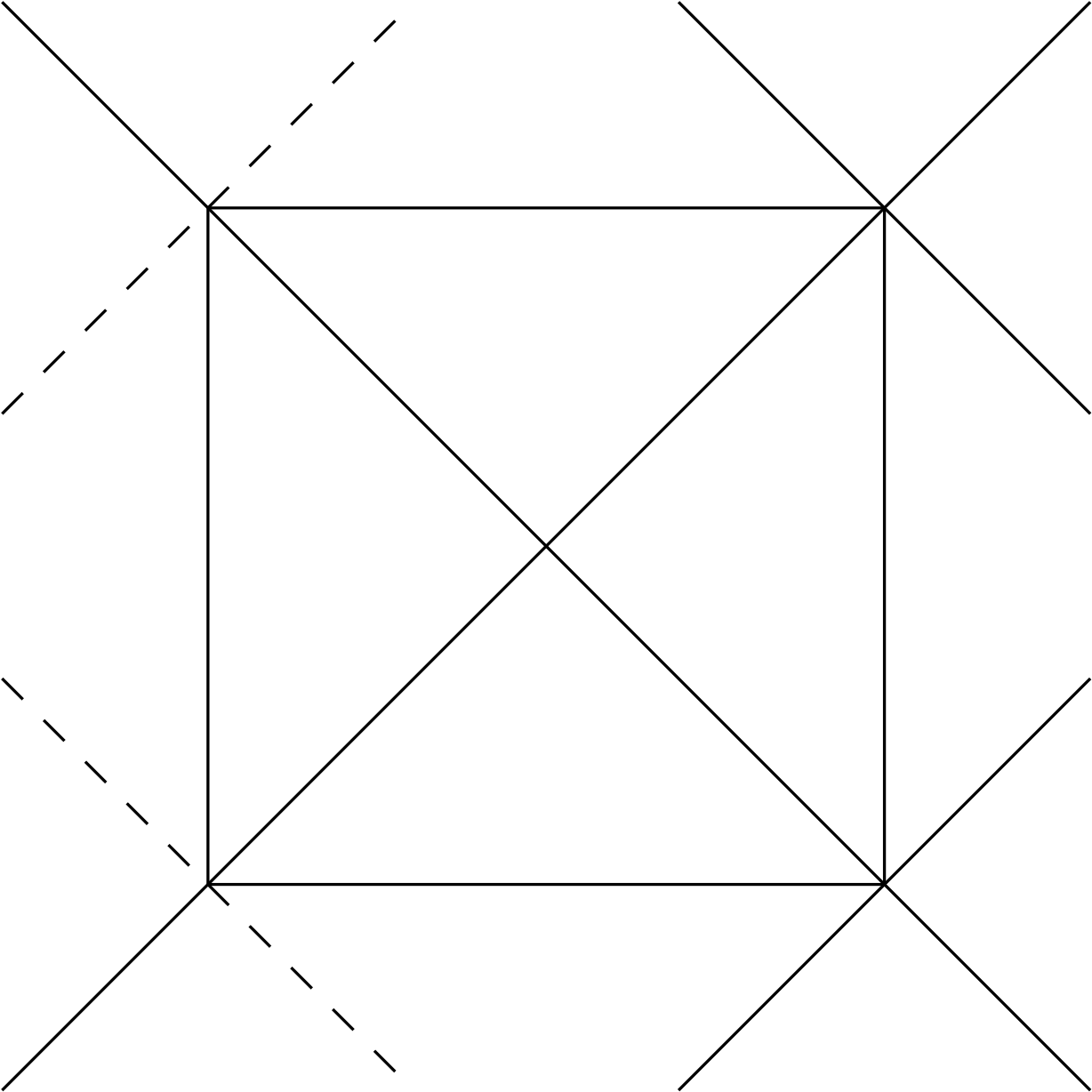}\hfill\phantom{.}
\caption{Left: The critical ring and skew diagonals. Right: The skew diagonals after the leftmost ones are moved right.}\label{skewFig}
\end{center}
\end{figure}

We note that the sum of the lengths of the skew diagonals is actually fairly long. In particular, if we let $L=x_{A+1-\ell}-x_\ell = y_{B+1-\ell}-y_\ell$ be the length of the side of the critical ring, we note that by moving the lower left and upper left skew diagonals $L$ units to the right, we are left with all four diagonals passing through the upper right and lower right corners. The sums of the lengths of these diagonals is now at least $2n+4D$. However, moving the other diagonals left may have reduced their lengths by as much as $L$ each. Therefore, we have the following claim:

\begin{claim}\label{totalLenClaim}
The sum of the lengths of the skew diagonals is at least $2n+4D-2L$. Furthermore, it is at least this plus half the sum of the lengths of the non-skew diagonals passing through only one point on the critical ring.
\end{claim}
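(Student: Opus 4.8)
The plan is to make precise the ``slide the left-hand skew diagonals $L$ units to the right'' idea sketched just above, and then to refine that sweep so that it also collects a $\tfrac12$-fraction of the lengths of the remaining single-intersection diagonals. Fix coordinates as in the paragraphs preceding the claim: let $\ell$ be the index of the critical ring, put its four corners at $P_1=(x_\ell,y_\ell)$ (lower left), $P_2=(x_{A+1-\ell},y_\ell)$ (lower right), $P_3=(x_\ell,y_{B+1-\ell})$ (upper left), $P_4=(x_{A+1-\ell},y_{B+1-\ell})$ (upper right), and recall $L=x_{A+1-\ell}-x_\ell=y_{B+1-\ell}-y_\ell$. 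The four skew diagonals are $S_1$ (positive, through $P_3$), $S_2$ (positive, through $P_2$), $S_3$ (negative, through $P_1$), $S_4$ (negative, through $P_4$), and these are four distinct lines. Throughout I would use that each corner $P_i$ has distance at least $D$ from \emph{every} side of the board; for instance for $P_2$ the four distances are $y_\ell-1\ge y_\ell-y_1=D$, $n-x_{A+1-\ell}\ge x_A-x_{A+1-\ell}=D$, $n-y_\ell\ge y_B-y_\ell=L+D\ge D$, and $x_{A+1-\ell}-1\ge x_{A+1-\ell}-x_1=L+D\ge D$, with the other corners symmetric.

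For the first inequality I would argue exactly as outlined. Translating $S_3$ right by $L$ produces the negative line through $P_2$, and translating $S_1$ right by $L$ produces the positive line through $P_4$; hence $S_2$ together with the translate of $S_3$ are the two diagonals through $P_2$, and $S_4$ together with the translate of $S_1$ are the two diagonals through $P_4$. By Lemma~\ref{twoDiagonalsLem} and the distance bound, the first pair has total length $n+1+2\,\mathrm{dist}(P_2)\ge n+2D$ and likewise the second, so the four translated lines have total length at least $2n+4D$. Since translating a slope-$\pm1$ line horizontally by $L$ changes the number of its lattice points inside the board by at most $L$, we get $\mathrm{len}(S_1)+\mathrm{len}(S_3)\ge(\text{total length of the two translates})-2L$, and adding back $\mathrm{len}(S_2)+\mathrm{len}(S_4)$ gives the bound $2n+4D-2L$ on the total length of the four skew diagonals.

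For the ``furthermore'' I would need a sharper sweep that simultaneously charges the non-skew single-intersection diagonals. Each of those meets the critical ring at a single non-corner point $p$ on one of its four sides, and it is the \emph{non-designated} of the two diagonals through $p$ (every ring point lies on exactly one designated diagonal, and designated diagonals meet the ring twice). The structural fact I would establish first is that every non-skew positive such diagonal has its $y-x$ intercept strictly between those of $S_1$ and $S_2$ (and every negative one has its $x+y$ intercept strictly between those of $S_3$ and $S_4$): $S_1,S_2$ are the extreme positive lines supporting the critical ring, so any other positive line meeting the ring is pinned strictly inside that strip. As $S_1$ sweeps rightward toward $P_4$ (its translate $S_1'$ has intercept exactly the midpoint of those of $S_1,S_2$), and $S_3$ sweeps toward $P_2$, these sweeps pass the pinned lines; applying Lemma~\ref{twoDiagonalsLem} at each such point $p$ with $\mathrm{dist}(p)\ge D$, and combining the bounds obtained from all four corner directions (which may require also translating $S_2,S_4$ leftward by the matching amounts), one should be able to convert the resulting slack into exactly the claimed $\tfrac12\sum\mathrm{len}$ term, the factor $\tfrac12$ being natural because each such diagonal is met only once by the ring and hence seen only once by the sweeps.

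The step I expect to be the real obstacle is precisely this last accounting: quantifying, for each non-skew single-intersection diagonal, how much of its length is genuinely new relative to the $2n+4D-2L$ already extracted, and doing so uniformly in the side of the ring on which $p$ lies, in whether that diagonal is positive or negative, and in the relative sizes of $L$, $D$, and the two segments into which the designated diagonal through $p$ is cut (one must avoid overcounting when many such diagonals are clustered in the width-$2L$ strip). The bound has to stay sharp in the extremal configurations such as Figure~\ref{17Qs}, where $|A-B|\le1$ has already forced the critical ring to be square with $A=B$, so it is exactly this extra term that eventually rules those out. Once Claim~\ref{totalLenClaim} is in hand, combining it with Lemma~\ref{critDiagLem} (together with $L\le n-1-2D$, which comes from $x_A-x_1=L+2D$ and $1\le x_1<x_A\le n$) should push the total length of the diagonals through critical-ring points past $(C+1)n$, finishing the outstanding cases of Proposition~\ref{mainProp}.
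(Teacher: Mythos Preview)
Your argument for the first sentence is correct and is exactly the paper's: slide the two left skew diagonals right by $L$, apply Lemma~\ref{twoDiagonalsLem} at the two right corners (each at distance at least $D$ from the board), and absorb the at-most-$L$ loss per translated line.

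The ``furthermore'' part, however, needs no new geometric idea at all, and your proposed ``sharper sweep'' is unnecessary. The point you are missing is that the four skew diagonals are themselves diagonals that meet the critical ring in exactly one point (indeed, that is precisely how they were introduced in the sentence before their definition). Hence in the bound of Lemma~\ref{critDiagLem} the term ``half the sum of the lengths of diagonals that intersect exactly one point in the critical ring'' already splits as
\[
\tfrac12\,(\text{sum over skew})\;+\;\tfrac12\,(\text{sum over non-skew single-intersection}).
\]
Plugging your first-sentence bound into the first summand gives at least $n+2D-L$, and the second summand is exactly the extra $\tfrac12$-term the claim promises. Combining with the $Cn+2CD$ already present in Lemma~\ref{critDiagLem} yields
\[
(C+1)n+2(C+1)D-L+\tfrac12\,(\text{non-skew single-intersection}),
\]
which is how the claim is used immediately afterwards. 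So the entire content of the second sentence is bookkeeping, not a new estimate; the obstacle you anticipated (careful accounting of overlaps in a width-$2L$ strip, clustering, etc.) simply does not arise.
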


Combining this with Lemma \ref{critDiagLem}, we have that the sum of the lengths of diagonals through points in the critical ring is at least
$$
(C+1)n+2(C+1)D-L.
$$
To complete our proof, we merely have to show that $L\leq 2(C+1)D$.

In other words, we are claiming that the distance between the two sides of the square is small compared to $D$. We begin with a much simpler statement that the medial row/column is not too far from at least one of the sides.

\begin{claim}
$\min(x_{A+1-\ell}-x_{(A+1)/2},x_{(A+1)/2}-x_\ell)\leq 2D$.
\end{claim}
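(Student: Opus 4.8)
The plan is to argue by contradiction: suppose that both $x_{(A+1)/2}-x_\ell$ and $x_{A+1-\ell}-x_{(A+1)/2}$ exceed $2D$, so that the medial column lies strictly between the two vertical sides of the critical ring, and then follow the designated diagonal through the medial point of the bottom side of that ring. First I record the rigidity we already have. Because the $G(M)+1$ queens realize the bound \eqref{boundEqn} up to its one extra unit, for every $i\le\ell$ the $i^{th}$ ring carries exactly $2C$ of them, each on exactly one designated diagonal; dually, each designated diagonal meets each of rings $1,\dots,\ell$ in exactly two points, and so in particular it meets the critical ring and the bounding box (ring $1$) in exactly two points each. We also already know that the critical ring is a square of side $L$ and that $D=x_\ell-x_1=x_A-x_{A+1-\ell}=y_\ell-y_1=y_A-y_{A+1-\ell}$, so the bounding box is a square of side $L+2D$.

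Set $B^{\star}=(x_{(A+1)/2},y_\ell)$, which is genuinely a point of the critical ring. A left--right reflection of the board fixes the medial column, preserves $D$, interchanges the two hypothesized quantities, and reverses the orientation of the designated diagonal through $B^{\star}$; so we may assume that diagonal $d$ is positively oriented, whence $B^{\star}$ is the point where $d$ enters the critical ring. Following $d$ down and to the left from $B^{\star}$, it meets rings $\ell-1,\dots,1$ in one grid point apiece; since there are only $\ell-1$ rows below $y_\ell$, spanning only $y_\ell-y_1=D$ of height, $d$ reaches the bottom row $y_1$ at the grid point $(x_{(A+1)/2}-D,\,y_1)$, so $x_{(A+1)/2}-D$ equals some $x_{i_0}$, and by hypothesis $x_{i_0}=x_{(A+1)/2}-D>x_\ell$, giving $i_0>\ell$. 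Hence the $\ell$ grid points of this lower half of $d$ occupy $\ell$ distinct column indices all lying in $\{\ell+1,\dots,(A+1)/2\}$, which forces $(A+1)/2-\ell\ge\ell$, i.e.\ $A\ge 4\ell-1$. In the case $A+B=8s+2$ (so $A=4s+1=4\ell-3$) this is false, and the claim follows at once.

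This leaves the case $A+B=8s+6$ (so $A=4s+3=4\ell-1$), where the count above is exactly tight and pins $d$ down: its lower half runs through $(x_{\ell+i},y_i)$ for $i=1,\dots,\ell$ (so in particular $x_{(A+1)/2}=x_{\ell+1}+D$), and, running the same argument up and to the right --- here $x_{(A+1)/2}>x_\ell$ forces $d$ to leave the critical ring through its \emph{right} side rather than its top --- its upper half runs through the points $(x_{A+1-i},\,x_{A+1-i}+y_\ell-x_{(A+1)/2})$ for $i=1,\dots,\ell$, using the column $x_{A+1-\ell}$ together with the $\ell-1$ columns to its right and shifting upward by exactly $D$ as its column runs from $x_{A+1-\ell}$ to $x_A$. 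From here I would replay this ``must use all $\ell$ outermost columns on that side and shift vertically by exactly $D$'' analysis for every designated diagonal meeting the right side of the critical ring (and symmetrically the left side), and use the hypothesis $x_{A+1-\ell}-x_{(A+1)/2}>2D$ --- which by the description of $d$'s upper half places the row where $d$ meets the right side high up on that side --- to push too many of these meeting-rows toward the top of the critical ring to fit into the $C/2+1$ available right-side positions, overflowing either the rows or the outermost columns of the grid. The main obstacle is exactly carrying out this last pigeonhole cleanly in the case $A+B\equiv 6\pmod 8$ without drowning in the casework over the orientations of the auxiliary diagonals and which sides of the critical ring each of them enters and exits. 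I would try to contain that by first extracting the normal form --- forced because each of the two main diagonals must pass through the corner $(x_i,y_i)$, resp.\ $(x_i,y_{A+1-i})$, of every ring $i\le\ell$ --- that the outer $\ell$ layers of the grid are reflection-symmetric, $x_i-x_1=x_A-x_{A+1-i}$ and $y_i-y_1=y_A-y_{A+1-i}$ for $i\le\ell$, and only then running the count.
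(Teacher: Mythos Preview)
Your argument for the case $A+B\equiv 2\pmod 8$ (equivalently $M\equiv 2\pmod{12}$, so $A=4\ell-3$) is correct and clean: following the designated diagonal through the medial bottom point $(x_{(A+1)/2},y_\ell)$ down to row $y_1$ produces $\ell$ grid points in the $(A+1)/2-\ell$ columns $\{\ell+1,\dots,(A+1)/2\}$, forcing $A\ge 4\ell-1$, a contradiction. But as you yourself say, the case $A+B\equiv 6\pmod 8$ ($M\equiv 10\pmod{12}$, $A=4\ell-1$) is left unfinished, and the outline you give for it is not close to a proof: it promises a pigeonhole on the right side of the critical ring after extracting a reflection-symmetric normal form, but neither the normal form nor the count is established, and you flag the casework over orientations as an unresolved obstacle. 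So as it stands this is a genuine gap, not just missing polish.

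The paper's proof avoids this obstacle by a different first move. Rather than starting at the medial column and following a single diagonal, it first uses a parity count to locate a column index $i$ with $\ell<i<A+1-\ell$ such that \emph{both} $(x_i,y_\ell)$ and $(x_i,y_{B+1-\ell})$ carry positively oriented designated diagonals. (The count: each designated diagonal through the left side of the critical ring hits the top if positive, the bottom if negative; since left and right sides have the same number of points, the number of positive diagonals through the top equals the number through the bottom, and since each side has an odd number of points, one orientation dominates on both top and bottom, producing such an $i$.) These two diagonals reach ring $1$ at $(x_j,y_1)$ and $(x_{j'},y_B)$ with $x_i-x_j=x_{j'}-x_i=D$, so $x_{j'}-x_j=2D$, while $j'-j\ge 2\ell-2$. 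When $A=4\ell-3$ this already forces $j\le (A+1)/2\le j'$, hence $x_{(A+1)/2}\in[x_j,x_{j'}]$ and one of the two distances is $\le 2D$. When $A=4\ell-1$ one can barely miss this, namely $i=(A+1)/2$, $j=\ell+1$, $j'=A-\ell$; but then looking at the designated diagonals through the adjacent points $(x_j,y_\ell)$ and $(x_{j'},y_\ell)$ --- they cannot be negative and positive respectively, else both would land on the same point $(x_i,y_1)$ --- gives one more step of size $\le D$ toward a side and finishes the argument. The key difference from your approach is the use of \emph{two} stacked diagonals through a common column: it immediately buys a horizontal span of exactly $2D$ together with an index span of at least $2\ell-2$, which is what makes the $A=4\ell-1$ case tractable without the open-ended casework you anticipate.
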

\begin{proof}
We begin by noting that a diagonal through the left side of the critical ring will intersect again on the top side if it is positively oriented and on the bottom side if it is negatively oriented. Since the number of points on the left side is the same as the number on the right, we determine that the number of positively oriented critical diagonals passing through the top of the critical ring is equal to the number of positively oriented critical diagonals passing through the bottom of the critical ring. Since the number of points on each side of the critical ring is odd, we may assume without loss of generality that the number of critical diagonals passing through either top or bottom of the critical ring is greater than the corresponding number of negative diagonals. Therefore, there must be an $\ell < i < A+1-\ell$ so that there are positively oriented critical diagonals passing through both $(x_i,y_\ell)$ and $(x_i,y_{B+1-\ell})$.

The critical diagonal through $(x_i,y_\ell)$ will also pass through $(x_j,y_1)$ with $j \leq i-\ell+1$, and the critical diagonal passing through $(x_i,y_{B+1-\ell})$ also passes through $(x_{j'},y_B)$ with $j'\geq i+\ell-1$. Note that $j'-j \geq 2\ell-2$ and that $x_{j'}-x_i=x_i-x_j = D$. Notice that it is definitely the case that $j' \geq (A+1)/2 \geq j$, thus if either $\ell\geq j$ or $j'\geq A+\ell-1$, the claim will be proved. The only way for neither of the above to hold would be if $M\equiv 10\pmod{12}$ and $i=(A+1)/2$, $j=\ell+1$, and $j'=A-\ell$.

In this case, we consider the critical diagonals through $(x_j,y_\ell)$ and $(x_{j'},y_\ell)$. We note that we cannot have the former be negatively oriented and the latter positively oriented because then the former diagonal would pass through $(x_j+D,y_1)=(x_i,y_1)$, and the latter would pass through $(x_{j'}-D,y_1)=(x_i,y_1)$. Therefore, we may assume without loss of generality that the diagonal through $(x_j,y_\ell)$ is positively oriented. However, this would imply that it passes through $(x_t,y_1)$ for $t\leq \ell$ and thus $x_j-x_\ell\leq D$, and therefore $x_i-x_\ell\leq 2D.$

This completes the proof of the claim.
\end{proof}

\begin{figure}
\begin{center}
\hfill\includegraphics[scale=0.1]{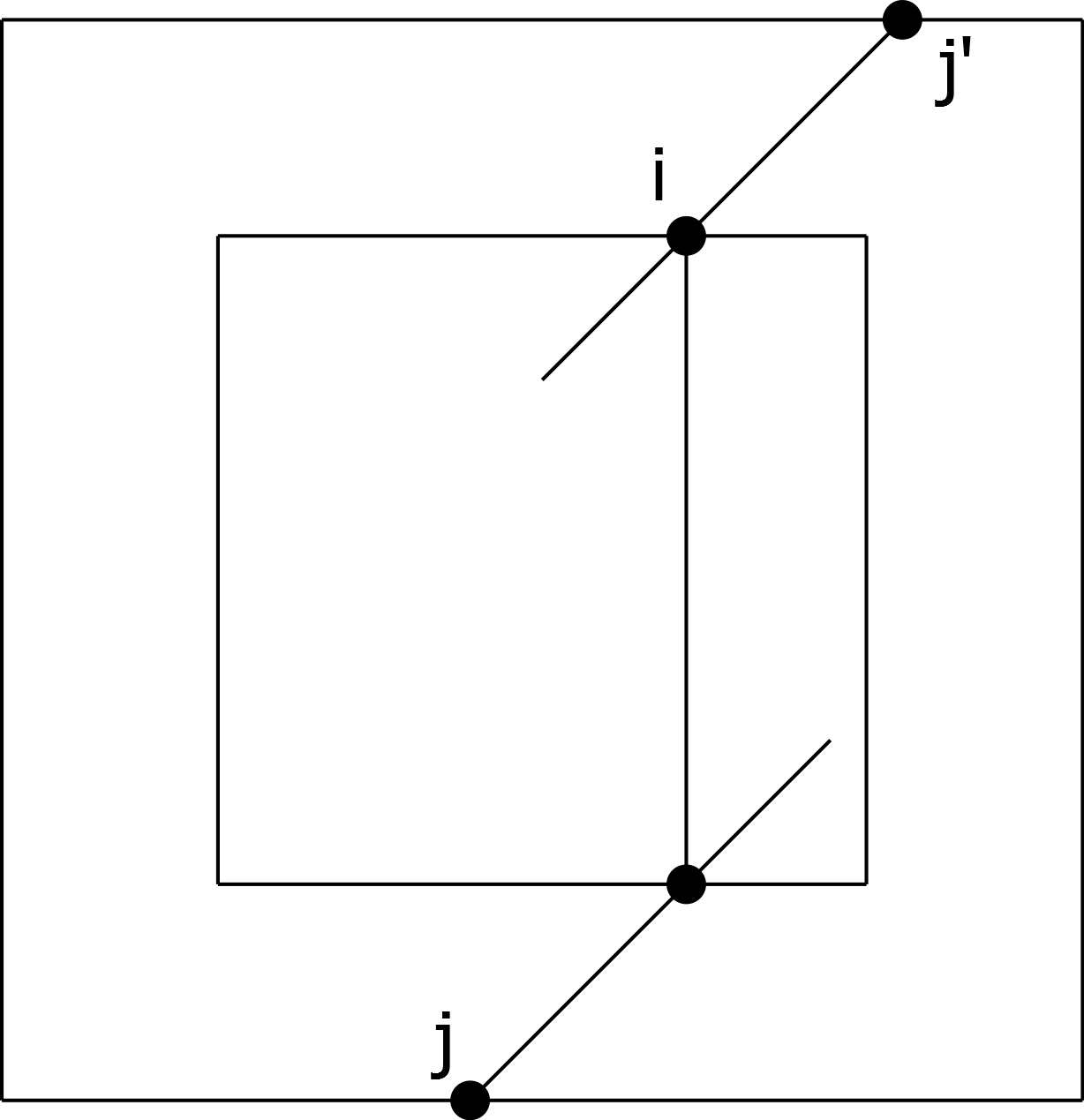}\hfill\includegraphics[scale=0.1]{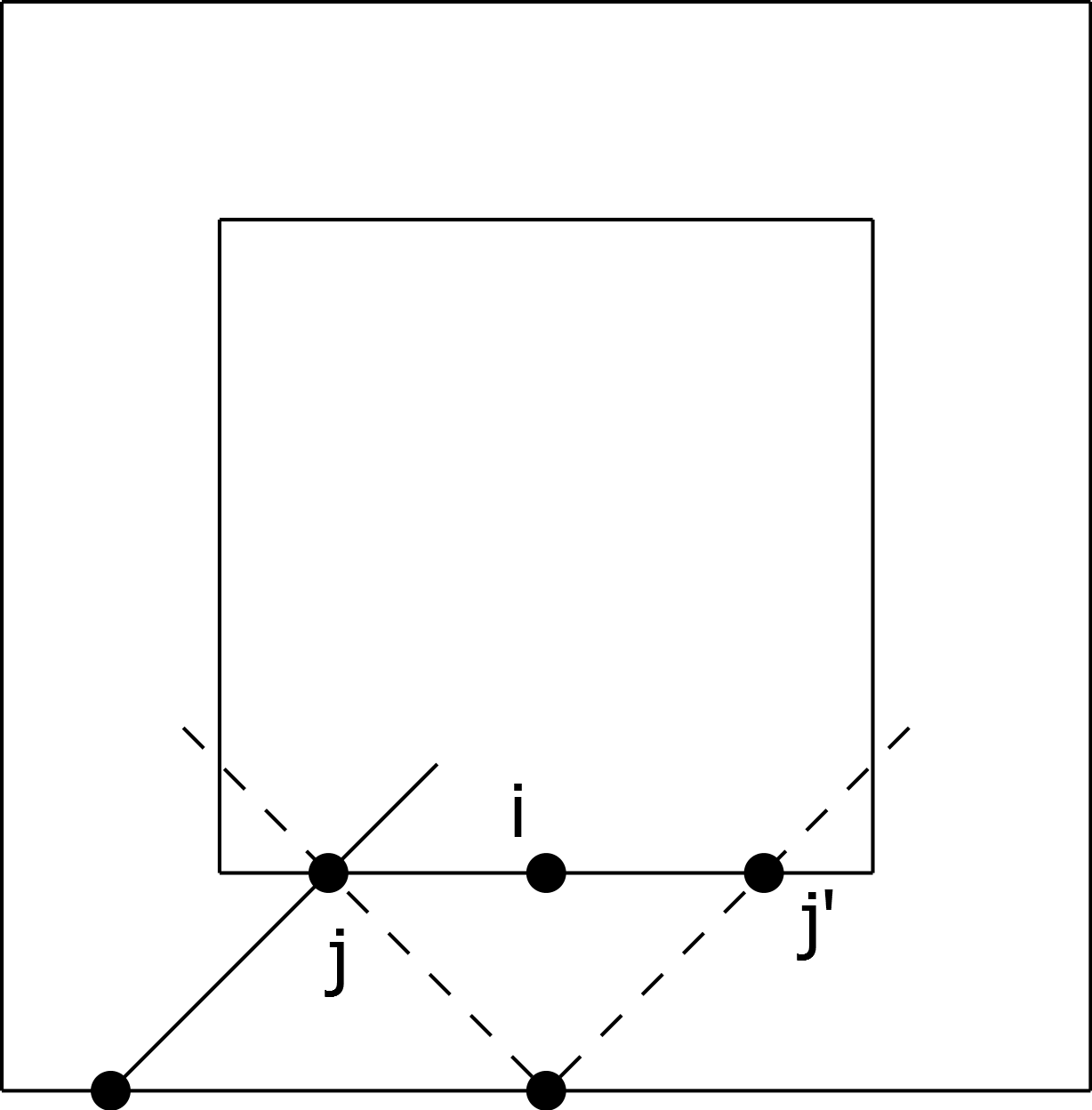}\hfill\phantom{.}
\caption{Left: The critical ring and two positively oriented critical diagonals passing through $(x_i,y_\ell)$ and $(x_i,y_{B+1-\ell})$. Right: A positively oriented diagonal through $(x_j,y_\ell)$.}\label{stackedpositivesFig}
\end{center}
\end{figure}

By the claim, we can assume without loss of generality that $x_{(A+1)/2}-x_\ell\leq 2D$. Similarly, to the above claim we also get that $\min(y_{B+1-\ell}-y_{(B+1)/2},y_{(B+1)/2}-y_\ell)\leq 2D$. Thus, we may also assume that $y_{(B+1)/2}-y_\ell\leq 2D.$

We note that there are a large number of points on the critical ring very close to the bottom left corner. In particular, we have $(x_\ell,y_i)$ for $\ell\leq i\leq (B+1)/2$ and $(x_i,y_\ell)$ for $\ell \leq i \leq (A+1)/2$. We are going to consider the positively oriented diagonals through these points.

On the one hand, suppose that such a diagonal passes through $(x_{A+1-\ell},y_j)$ with $j\leq (B+1)/2$ or $(x_j,y_{B+1-\ell})$ with $j\leq (A+1)/2$. Without loss of generality, the line passes through $(x_i,y_\ell)$ and $(x_{A+1-\ell},y_j)$ with $\ell \leq i \leq (A+1)/2$ and $\ell \leq j \leq (B+1)/2$. Then we have that
$$
L = x_{A+1-\ell}-x_\ell = (x_{A+1-\ell}-x_i)+(x_i-x_\ell) = (y_j-y_\ell)+(x_i-x_\ell) \leq 2D+2D = 4D.
$$
So in this case $L\leq 4D$, and we are done by Claim \ref{totalLenClaim}.

In the other case, where no such diagonal exists, the only other points of the critical ring that these diagonals can pass through are $(x_{A+1-\ell},y_j)$ for $(B+1)/2< j \leq B+1-\ell$ and $(x_j,y_{B+1-\ell})$ for $(A+1)/2 \leq j \leq A+1-\ell$. Note that there are two fewer such points, than points of the form we are starting with, and therefore at least two of these (non-skew) diagonals intersect the critical ring in at least one point. Furthermore, it is easy to see that each of these diagonals have length at least $L-2D$. Therefore by Claim \ref{totalLenClaim}, the sum of the lengths of the diagonals through queens in the critical ring in this case must be at least
$$
(C+1)n + 2(C+1)D - L + (L-2D) > (C+1)n.
$$

\begin{figure}
\begin{center}
\hfill\includegraphics[scale=0.1]{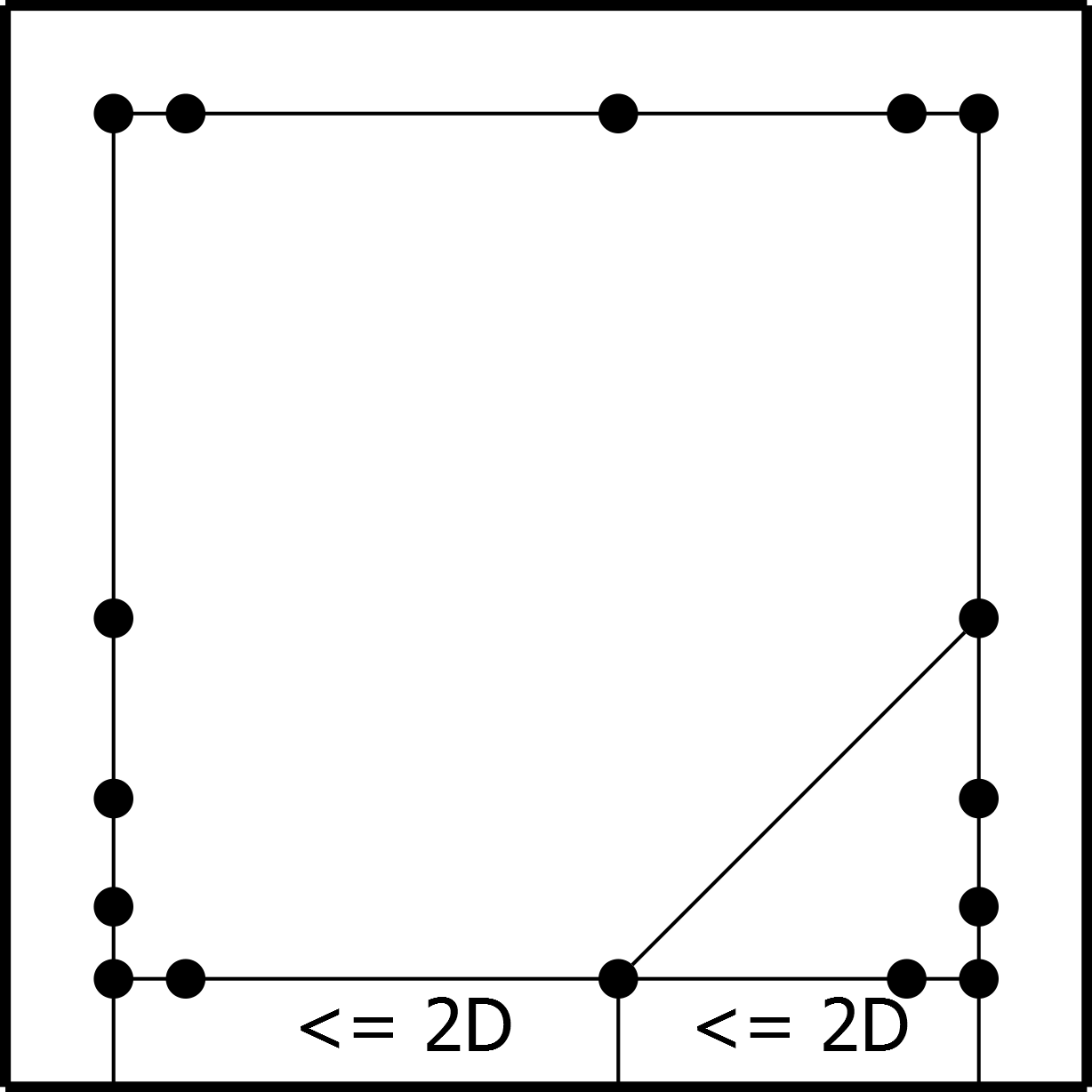}\hfill\includegraphics[scale=0.1]{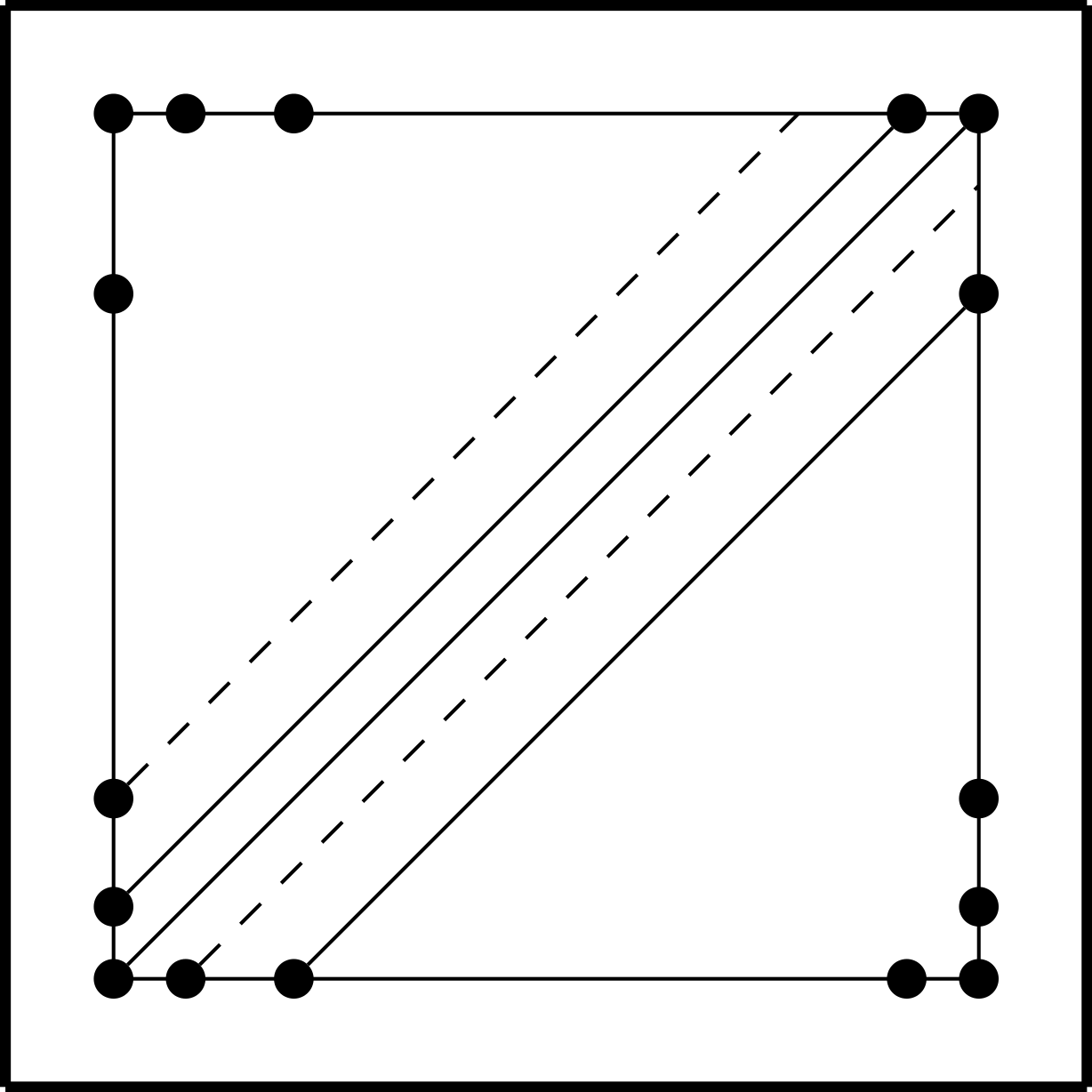}\hfill\phantom{.}
\caption{Two possibilities. Left: A diagonal connecting a vertex close to the bottom left to one close to the bottom right, showing that $L$ is small. Right: $L$ is much larger than $D$ and we have additional diagonals intersecting the critical ring in only one point.}\label{missingdiagsFig}
\end{center}
\end{figure}

Thus, things work out in either case, completing our proof.

\section*{Acknowledgements}

I would like to thank Ron Graham and Donald Knuth for their helpful discussions about this problem and useful comments about the presentation of this paper. This work was supported in part by the NSF-CAREER grant award number 1553288.

\end{document}